\documentclass{article}
\usepackage{graphicx, tikz, color,url}
\usepackage{amsmath,amssymb,latexsym,amsthm}
\usepackage{amsfonts}
\usetikzlibrary{arrows}
\usetikzlibrary{positioning, quotes}
\usepackage{soul}

\newtheorem{theorem}{Theorem}[section]

\newtheorem{lemma}[theorem]{Lemma}
\newtheorem{proposition}[theorem]{Proposition}
\newtheorem{observation}[theorem]{Observation}

\newtheorem{corollary}[theorem]{Corollary}

\newtheorem{problem}{Problem}

\newcommand{\cp}{\,\square\,}

\newcommand{\cF}{{\cal F}}

\begin{document}
\title{On the isolation numbers in graph products}
\author{
Boštjan Brešar$^{a,b}$\\
\and
Douglas F.\ Rall$^{c  }$\\
}
\maketitle

\begin{center}
$^a$ Faculty of Natural Sciences and Mathematics, University of Maribor, Slovenia\\
$^b$ Institute of Mathematics, Physics and Mechanics, Ljubljana, Slovenia\\
$^c$ Emeritus Professor of Mathematics, Furman University, Greenville, SC, USA\\
\end{center}
\medskip

\begin{abstract}
As a continuation of a previous study of isolation numbers in Cartesian and lexicographic products, we investigate isolation numbers and, more generally, $\cF$-isolation numbers in direct, strong, lexicographic, and Cartesian products of graphs. For direct products, we derive upper bounds for the $\{K_{n_1,\ldots,n_d}\}$-isolation number in terms of isolation and total domination parameters of the factors, and establish lower bounds based on open packings. We also determine exact values for several infinite families of direct products, including $\iota(P_{4\ell}\times C_{2k+1})=\ell(k+1)$. For strong products, we prove a general  lower bound on $\iota(G\,\boxtimes\, H,{\cal F})$ involving the $2$-packing number and provide an upper bound on $\iota(G\,\boxtimes\, H)$. For lexicographic products, we determine the $\cF$-isolation number in several general settings, obtaining exact formulas in terms of domination and total domination numbers of the first factor. Finally, for Cartesian products, we extend results from our previous work to arbitrary graph families $\cF$. We introduce $\cF$-isolation graphs and use $\cF$-transversals to derive general upper bounds, together with corresponding lower bounds.
\end{abstract}

\noindent
{\bf Keywords:}  isolation number of a graph, isolating set, Cartesian product, direct product, strong product, lexicographic product.\\

\noindent
{\bf AMS Subj.\ Class.\ (2020)}: 05C69, 05C76

\maketitle

\section{Introduction}

The isolation number of a graph, introduced by Caro and Hansberg~\cite{ch-2017}, is a natural relaxation of the domination number. Instead of requiring every vertex of a graph to be dominated, one seeks a smallest set of vertices whose closed neighborhood intersects every edge of the graph. More generally, given a family of graphs $\mathcal F$, the $\mathcal F$-isolation number $\iota(G,\mathcal F)$ is the minimum cardinality of a vertex set whose closed neighborhood removal eliminates all subgraphs belonging to $\mathcal F$. This parameter unifies several domination-type concepts and has attracted considerable attention in recent years; see, for instance, \cite{BBS,borg,BLMS, bg-2024, ch-2017,CC,lms-2024, tjk-2019} and the references therein. 

Isolation parameters in graph products form a natural line of investigation, combining two classical areas of graph theory: domination-type problems and graph products. Research on domination in graph products has a long history, motivated in part by Vizing's conjecture and its numerous variants; see the survey~\cite{BDG-12} and the monographs~\cite{HIK,HHH3}. In a recent paper~\cite{iso} we studied, together with co-authors, isolation numbers in Cartesian and lexicographic products. The purpose of the present paper is to continue this investigation from a broader perspective by considering $\mathcal F$-isolation numbers in all four standard graph products. 

A recurring theme throughout the paper is the relationship between isolation parameters and structural properties of graph products. Several of our results depend on understanding when a prescribed graph occurs as a subgraph of a product. Such questions have a long tradition in the theory of graph products and are often considerably more delicate than analogous questions concerning graph invariants. A notable recent contribution in this direction is the work of Hickingbotham and Wood~\cite{HW}, who established structural characterizations for the occurrence of complete multipartite graphs in direct (and other) products. Their theorem provides a key ingredient in our analysis of $\{K_{n_1,\ldots,n_d}\}$-isolation numbers in direct products and leads to general upper bounds as well as several exact formulas. Structural subgraph questions also arise naturally in the Cartesian product. In this setting, an important role is played by S-prime graphs, introduced by Lamprey and Barnes~\cite{LB}; see also~\cite{hel-2013,HOS}. Recall that a graph $X$ is S-prime (with respect to the Cartesian product) if every embedding of $X$ into a Cartesian product $G\cp H$ forces $X$ to be contained in one of the factors. This property allows one to transfer information about forbidden subgraphs from the factors to the product. We exploit this idea to derive general upper bounds on $\mathcal F$-isolation numbers of Cartesian products when the members of $\mathcal F$ are S-prime. 

The paper is organized as follows. In Section~\ref{sec:dir} we study direct products. We establish upper bounds for the $\{K_{n_1,\ldots,n_d}\}$-isolation number based on total domination and derive complementary lower bounds involving open packings. As a consequence, several exact formulas are obtained, including the determination of the isolation number of $P_{4\ell}\times C_{2k+1}$. Section~\ref{sec:str} is devoted to strong products, where we prove general lower and upper bounds for $\mathcal F$-isolation numbers. In Section~\ref{sec:lex} we investigate lexicographic products and determine the $\mathcal F$-isolation number in several contexts according to the value of $\iota(H,\mathcal F)$.  In particular, when $\iota(H,\cF)\ge 2$, the $\cF$-isolation number of $G\circ H$ equals the total domination number of $G$. 
Finally, in Section~\ref{sec:car} we consider Cartesian products, extending several known results on isolation numbers to arbitrary graph families and obtaining bounds in terms of isolation graphs, $\mathcal F$-transversals, and S-prime graphs.

\section{Notation and preliminaries}

Throughout the paper, all graphs are finite, simple, and undirected. For a graph $G$, we denote by $V(G)$ and $E(G)$ its vertex set and edge set, respectively. The order of $G$ is denoted by $n(G)=|V(G)|$. For a vertex $v\in V(G)$, its open neighborhood is denoted by $N_G(v)$, while $N_G[v]=N_G(v)\cup\{v\}$ denotes its closed neighborhood. The degree, $\deg_G(v)$ of $v\in V(G)$ is defined as $|N_G[v]|$. For a set $S\subseteq V(G)$, we write $N_G(S)=\bigcup_{v\in S}N_G(v)$ and $N_G[S]=\bigcup_{v\in S}N_G[v]$. If no confusion can arise, the subscript $G$ is omitted. The maximum degree of $G$ is denoted by $\Delta(G)$.
For a positive integer $d$, we use $[d]=\{1,\ldots,d\}$.

A set $D\subseteq V(G)$ is a {\em dominating set} of $G$ if every vertex in $V(G) -  D$ has a neighbor in $D$. The minimum cardinality of a dominating set is the {\em domination number} of $G$, denoted by $\gamma(G)$. A set $D\subseteq V(G)$ is a {\em total dominating set} of $G$ if every vertex of $G$ has a neighbor in $D$. The minimum cardinality of a total dominating set is the {\em total domination number} of $G$, denoted by $\gamma_t(G)$.
A set $P\subseteq V(G)$ is a {\em $2$-packing} if the distance between every two distinct vertices of $P$ is at least three. The maximum cardinality of a $2$-packing in $G$ is the {\em $2$-packing number} of $G$, denoted by $\rho_2(G)$. A set $P\subseteq V(G)$ is an {\em open packing} if the open neighborhoods of its vertices are pairwise disjoint. The maximum cardinality of an open packing in $G$ is the {\em open packing number} of $G$, denoted by $\rho^o(G)$.
The {\em clique number} of a graph $G$, denoted by $\omega(G)$, is the order of a largest complete subgraph of $G$. For a positive integer $k$, the parameter $\alpha_k(G)$ denotes the maximum order of an induced $k$-colorable subgraph of $G$.

Let $\mathcal F$ be a family of graphs. A set $S\subseteq V(G)$ is an {\em $\mathcal F$-isolating set} of $G$ if the graph induced by $V(G) - N_G[S]$ contains no subgraph isomorphic to a member of $\mathcal F$. The minimum cardinality of an $\mathcal F$-isolating set of $G$ is the {\em $\mathcal F$-isolation number} of $G$ and is denoted by $\iota(G,\mathcal F)$. An $\mathcal F$-isolating set of cardinality $\iota(G,\mathcal F)$ is called an {\em $\iota(G,\mathcal F)$-set}. If $\mathcal F=\{F\}$, we write $\iota(G,F)$ instead of $\iota(G,\{F\})$. In particular, when $F=K_2$, the parameter $\iota(G,K_2)$ is the usual {\em isolation number} of $G$ and is denoted by $\iota(G)$. An $\iota(G)$-set is an isolating set of cardinality $\iota(G)$.
An {\em $\mathcal F$-transversal} of a graph $G$ is a set $T\subseteq V(G)$ that intersects the vertex set of every subgraph of $G$ isomorphic to a member of $\mathcal F$. The minimum cardinality of an $\mathcal F$-transversal of $G$ is denoted by $\beta_{\mathcal F}(G)$. If $\cF=\{K_2\}$, then an $\cF$-transversal is called a {\em vertex cover}.
A set that is both an $\mathcal F$-transversal and a dominating set of $G$ is called a {\em dominating $\mathcal F$-transversal}. The minimum cardinality of a dominating $\mathcal F$-transversal of $G$ is denoted by $\beta_{\mathcal F}^{\rm dom}(G)$.

Let $\mathcal I(G,\mathcal F)$ be the family of all $\iota(G,\mathcal F)$-sets. For $A\in\mathcal I(G,\mathcal F)$, let $L_A=V(G) -  N_G[A]$. The {\em $\mathcal F$-isolation graph} of $G$, denoted by $I_{\mathcal F}(G)$, is the graph whose vertex set is $\mathcal I(G,\mathcal F)$, in which two sets $A,B\in\mathcal I(G,\mathcal F)$ are adjacent if and only if $L_A\cap L_B=\emptyset$.

We next recall the graph products used in this paper. Let $G$ and $H$ be graphs. Each of the following products has vertex set $V(G)\times V(H)$. The {\em Cartesian product} $G\cp H$ is the graph in which distinct vertices $(g,h)$ and $(g',h')$ are adjacent if and only if either $g=g'$ and $hh'\in E(H)$, or $h=h'$ and $gg'\in E(G)$. The {\em direct product} $G\times H$ is the graph in which distinct vertices $(g,h)$ and $(g',h')$ are adjacent if and only if $gg'\in E(G)$ and $hh'\in E(H)$. The {\em strong product} $G\boxtimes H$ is the graph with $E(G\boxtimes H)=E(G \cp H) \cup E(G \times H)$.
The {\em lexicographic product} $G\circ H$ is the graph in which distinct vertices $(g,h)$ and $(g',h')$ are adjacent if and only if $gg'\in E(G)$, or $g=g'$ and $hh'\in E(H)$.

For a vertex $g\in V(G)$, the set $\{g\}\times V(H)$ is called the {\em $H$-fiber} over $g$ and is denoted by $^{g}\!H$. Analogously, for $h\in V(H)$, the set $V(G)\times\{h\}$ is called the {\em $G$-fiber} over $h$ and is denoted by $G^h$. For a set $X\subseteq V(G)\times V(H)$, the {\em projections} of $X$ onto $V(G)$ and $V(H)$ are, respectively,
\[
p_G(X)=\{g\in V(G):(g,h)\in X\text{ for some }h\in V(H)\}
\]
and
\[
p_H(X)=\{h\in V(H):(g,h)\in X\text{ for some }g\in V(G)\}.
\]
A graph $X$ is {\em S-prime with respect to the Cartesian product} if, for all graphs $G$ and $H$, the fact that $X$ is isomorphic to a subgraph of $G\cp H$ implies that $X$ is isomorphic to a subgraph of $G$ or of $H$.

\section{Direct product}
\label{sec:dir}

In the main result of this section, we will need the following result due to Hickingbotham and Wood~\cite{HW} concerning the existence of a complete multipartite graph $K_{d_1,\ldots,d_n}$ as a subgraph in a direct product of two graphs.

\begin{theorem} {\rm \cite[Theorem 6]{HW}} For all integers $d \ge 2$ and positive integers $n_1,\ldots,n_d$, and for all graphs $G_1$ and $G_2$ with nonempty edge sets, $K_{n_1,\ldots,n_d}$ is a subgraph of $G_1 \times G_2$ if and only if there exist positive integers $a_i,b_i$ $(1 \le i \le d)$ such that $ K_{a_1,\ldots,a_d}$ is a subgraph of $G_1$, $K_{b_1,\ldots,b_d}$ is a subgraph of $G_2$ and $n_i \le a_i b_i$ for every $i \in [d]$.
\label{thm:wood}
\end{theorem}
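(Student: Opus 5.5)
The plan has two directions. The "if" direction is constructive and should be routine. The "only if" direction is a projection argument, and the counting inside it is the real work.

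For sufficiency, let $A_1,\ldots,A_d$ be the parts of a $K_{a_1,\ldots,a_d}$ in $G_1$ and $B_1,\ldots,B_d$ the parts of a $K_{b_1,\ldots,b_d}$ in $G_2$. Set $C_i=A_i\times B_i\subseteq V(G_1)\times V(G_2)$. Then $|C_i|=a_ib_i\ge n_i$, and the sets $C_i$ are pairwise disjoint because the $A_i$ are. Take $(x,y)\in C_i$ and $(x',y')\in C_j$ with $i\ne j$. Then $xx'\in E(G_1)$ and $yy'\in E(G_2)$, so the two vertices are adjacent in $G_1\times G_2$. Choosing $n_i$ vertices from each $C_i$ gives a subgraph $K_{n_1,\ldots,n_d}$.

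For necessity, suppose $K_{n_1,\ldots,n_d}$ lies in $G_1\times G_2$ with parts $X_1,\ldots,X_d$. Let $A_i=p_{G_1}(X_i)$ and $B_i=p_{G_2}(X_i)$. For $i\ne j$, every $g\in A_i$ and $g'\in A_j$ come from some $(g,h)\in X_i$ and $(g',h')\in X_j$, and these are adjacent. Hence $gg'\in E(G_1)$, and in particular $g\ne g'$ because $G_1$ has no loops. So the $A_i$ are pairwise disjoint and pairwise completely joined, and they form a $K_{|A_1|,\ldots,|A_d|}$ in $G_1$. The same argument gives a $K_{|B_1|,\ldots,|B_d|}$ in $G_2$. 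Since $X_i\subseteq A_i\times B_i$, we get $n_i\le|X_i|\le |A_i||B_i|$. Setting $a_i=|A_i|$ and $b_i=|B_i|$, all of which are positive, finishes this direction. The hypothesis $d\ge 2$ is what makes the projection argument produce edges.

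The main obstacle is keeping the notion of "subgraph" straight. A $K_{a_1,\ldots,a_d}$ subgraph need not be induced, so vertices inside one part may be adjacent. That causes no trouble, because only edges between different parts are ever used. The one point needing care is the disjointness of the projected parts, which comes from the looplessness of the factors as shown above. The nonempty-edge hypothesis then only guarantees that the statement is not vacuous.
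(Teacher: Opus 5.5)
Your proof is correct and complete in both directions. For the ``if'' direction, the sets $A_i\times B_i$ are pairwise disjoint and pairwise completely joined, so they span a $K_{a_1b_1,\ldots,a_db_d}$. For the ``only if'' direction, the projections of the parts $X_i$ are nonempty. They are pairwise disjoint by looplessness, pairwise completely joined, and satisfy $X_i\subseteq A_i\times B_i$.

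The paper does not prove this statement; it quotes it from \cite[Theorem 6]{HW}. So there is no in-paper argument to compare yours against, and your argument stands as a self-contained justification.

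Your closing remark about the nonempty-edge hypothesis is slightly off. The hypothesis is not needed at all. For $d\ge 2$, either side of the equivalence already forces both $G_1$ and $G_2$ to have edges. Hence if either factor is edgeless, both sides are false, and the equivalence holds for all graphs.

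This matters for how the paper uses the result. It only needs the ``only if'' direction, in the proof of Theorem~\ref{thm:direct}, to conclude from $K_d\not\subseteq H[R]$ that $G\times H[R]$ contains no $K_{n_1,\ldots,n_d}$. There $H[R]$ may well be edgeless. Your projection argument handles that case without any change.
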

Using Theorem~\ref{thm:wood}, we now present an upper bound for the $K_{n_1,\ldots,n_d}$-isolation number in the direct product.

\begin{theorem}
\label{thm:direct}
If $G$ and $H$ are arbitrary graphs that have no isolated vertices and $n_1,\ldots,n_d$, where $d\ge 2$ are positive integers, then 
$$
\iota(G\times H, K_{n_1,\ldots,n_d})\le \min\big\{\iota(G,K_d)\gamma_t(H),\iota(H,K_d)\gamma_t(G)\big\}.
$$
\end{theorem}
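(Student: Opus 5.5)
By symmetry of the direct product it suffices to show $\iota(G\times H, K_{n_1,\ldots,n_d})\le \iota(G,K_d)\gamma_t(H)$. Let $A$ be an $\iota(G,K_d)$-set, so $G-N_G[A]$ contains no $K_d$. Let $D$ be a $\gamma_t(H)$-set; it exists because $H$ has no isolated vertices. The candidate isolating set is $S=A\times D$, with $|S|=\iota(G,K_d)\gamma_t(H)$.

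The first step is to compute which vertices $N[S]$ covers. Take $(g,h)$ with $g\in N_G(A)$. Since $D$ is total dominating, $h$ has a neighbor $h'\in D$. If $a\in A$ is adjacent to $g$, then $(a,h')\in S$ is adjacent to $(g,h)$. Hence $N_G(A)\times V(H)\subseteq N[S]$.

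Vertices $(a,h)$ with $a\in A$ need separate treatment, and this is the main obstacle: $(a,h)$ is generally not adjacent to $(a,h')$ in $G\times H$. If $a$ has a neighbor in $A$, the previous argument covers it. Otherwise $a$ is isolated in $G[A]$, and $(a,h)$ may remain in $G\times H-N[S]$. So the vertices left over lie in $(V(G)-N_G(A))\times V(H)$. The $G$-coordinates of these vertices span $G-N_G(A)$, which is not necessarily $K_d$-free: a vertex of $A$ can have all its neighbours in $N_G(A)$, but it can't be joined to other undominated vertices, since any neighbour of $a$ lies in $N_G(A)$.

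So the next step is to show that $G'=G[V(G)-N_G(A)]$ has no $K_d$. A clique in $G'$ containing some $a\in A$ would need a neighbor of $a$ in $G'$. All neighbors of $a$ lie in $N_G(a)\subseteq N_G(A)$, so any such neighbor is either excluded from $G'$ or is itself a vertex of $A$ adjacent to $a$, hence in $N_G(A)$. Therefore vertices of $A$ are isolated in $G'$. Since $d\ge2$, any $K_d$ in $G'$ avoids $A$ and so lies in $G-N_G[A]$, which is impossible. Thus $G'$ is $K_d$-free.

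For the final step, the remaining graph $G\times H-N[S]$ is a subgraph of $G'\times H$. If it contained $K_{n_1,\ldots,n_d}$, Theorem~\ref{thm:wood} (applied when $G'$ and $H$ have edges; otherwise the product has no edges and the claim is trivial) would give $K_{a_1,\ldots,a_d}\subseteq G'$ with all $a_i\ge1$, hence $K_d\subseteq G'$, a contradiction. Therefore $S$ is a $K_{n_1,\ldots,n_d}$-isolating set, which proves the bound.
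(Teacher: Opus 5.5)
Your proof is correct and takes essentially the paper's approach: the same candidate set (an $\iota(\cdot,K_d)$-set of one factor times a $\gamma_t$-set of the other), followed by the same appeal to Theorem~\ref{thm:wood} on a $K_d$-free factor. The only difference is bookkeeping. The paper computes $N[A\times B]$ exactly and discards the undominated independent piece lying over the vertices of the isolating set that have no neighbour in it, because that piece has no edges to the rest. You instead absorb those vertices into the factor $G-N_G(A)$, where they are isolated, and apply the theorem to $(G-N_G(A))\times H$ directly; you also correctly handle the edgeless case that the theorem excludes.
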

\begin{proof}
Let $A$ be a $\gamma_t(G)$-set and let $A^*=V(G) - A$. Let $B$ be a $\iota(H,K_d)$-set, $B^*=N_H(B)-B$, and $R=V(H)-N_H[B]$. In addition, let $B_2$ be the set of vertices in $B$ having no neighbor in $B$, while $B_1=B-B_2$. We claim that $A\times B$ is an $K_{n_1,\ldots,n_d}$-isolating set in $G\times H$. 

First, we show that $$N_{G\times H}[A\times B]=(A\times B)\cup (A^*\times B_1)\cup (V(G)\times B^*).$$
Let $(g,h)\in A^*\times B_1$. Since $g\in A^*$, there exists a vertex $g'\in A$ such that $gg'\in E(G)$, and since $h\in B_1$, there exists a vertex $h'\in B_1$ such that $hh'\in E(H)$. Consequently, $(g',h')\in A\times B$ dominates $(g,h)$. Now, let $(g,h)\in V(G)\times B^*$. Since $A$ is a total dominating set of $G$, there exists a neighbor $g'\in A$ of the vertex $g$. In addition, $h$ has a neighbor $h'\in B$. Thus, $(g,h)$ is dominated by $(g',h')\in A\times B$. 

It remains to consider the subgraph of $G\times H$ induced by $(A^*\times B_2)\cup (V(G)\times R)$. Since $B_2$ is an independent set, it follows that $A^*\times B_2$ is an independent set in $G\times H$. In addition, there are no edges between $A^*\times B_2$ and $V(G)\times R$ since there are no edges between $B_2$ and $R$ in $H$. Therefore, it remains to check that the subgraph induced by $V(G)\times R$ does not contain $K_{n_1,\dots,n_d}$ as a subgraph. Since $B$ is an $\iota(H,K_d)$-isolating set, the graph $H[R]$ does not contain $K_d$, which is the complete $d$-partite graph $K_{1,\ldots,1}$. Now, by Theorem~\ref{thm:wood} we derive that $G\times H[R]$ does contain $K_{n_1,\ldots,n_d}$ as a subgraph. Indeed, since  $K_{1,\ldots,1}$ is not a subgraph of $H[R]$, it is clear that $H[R]$ also does not admit a subgraph $K_{b_1,\ldots,b_d}$ for any positive integers $b_1,\dots, b_d$. All in all, $A\times B$ is an $K_{n_1,\ldots,n_d}$-isolating set in $G\times H$ and $|A\times B|=\gamma_t(G)\iota(H,K_d)$. Reversing the roles of $G$ and $H$, the stated inequality follows. 
\end{proof}

A special case of Theorem~\ref{thm:direct} when $n_i=1$ for all $i\in [d]$, leads to the following corollary.

\begin{corollary}
 If $G$ and $H$ are arbitrary graphs that have no isolated vertices, then 
$$
\iota(G\times H, K_d)\le \min\big\{\iota(G,K_d)\gamma_t(H),\iota(H,K_d)\gamma_t(G)\big\}.
$$   
\end{corollary}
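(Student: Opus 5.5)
The corollary is the special case $n_1=\cdots=n_d=1$ of Theorem~\ref{thm:direct}, since $K_{1,\ldots,1}$ with $d$ parts is exactly $K_d$. So I would simply substitute these values into Theorem~\ref{thm:direct}. For $d\ge 2$ this gives $\iota(G\times H,K_d)\le \min\{\iota(G,K_d)\gamma_t(H),\iota(H,K_d)\gamma_t(G)\}$ verbatim. The hypotheses agree: $G$ and $H$ have no isolated vertices, so $\gamma_t$ is defined for both.

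One detail needs checking. The proof of Theorem~\ref{thm:direct} contains the typo ``does contain'', which should read ``does not contain''. The intended reasoning runs as follows. Let $R=V(H)-N_H[B]$, so $H[R]$ is $K_d$-free. Then $H[R]$ contains no $K_{b_1,\ldots,b_d}$ with all $b_i\ge 1$, because any such graph contains $K_d$. Now suppose $K_d$ were a subgraph of $G\times H[R]$.

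If $H[R]$ has at least one edge, Theorem~\ref{thm:wood} applies and would produce some $K_{b_1,\ldots,b_d}\subseteq H[R]$, which is impossible.

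If $H[R]$ has no edges, then $G\times H[R]$ has no edges at all. Since $d\ge 2$, it cannot contain $K_d$ in that case either.

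This second case is the only point I would flag, because Theorem~\ref{thm:wood} assumes both factors have nonempty edge sets. It is handled directly as above.

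The remaining parts of the argument carry over unchanged. The vertices of $A^*\times B_2$ form an independent set, and there are no edges between $A^*\times B_2$ and $V(G)\times R$. Hence $A\times B$ is a $K_d$-isolating set of size $\gamma_t(G)\iota(H,K_d)$. Exchanging the roles of $G$ and $H$ gives the other term of the minimum.
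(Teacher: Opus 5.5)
Your proposal is correct and follows the paper's own argument: the corollary is the specialization $n_1=\cdots=n_d=1$ of Theorem~\ref{thm:direct}. The restriction $d\ge 2$ you note is inherited from that theorem and is genuinely needed; for $d=1$ one has $\gamma(K_3\times K_3)=3>2=\gamma(K_3)\gamma_t(K_3)$.

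Reading ``does contain'' as ``does not contain'' is right, and your separate treatment of an edgeless $H[R]$, where Theorem~\ref{thm:wood} formally does not apply, is a valid tightening of the paper's proof. For $K_d$ you could even skip Theorem~\ref{thm:wood} entirely, since projecting a $K_d$ in $G\times H[R]$ onto the second coordinate gives a $K_d$ in $H[R]$.
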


We remark that the key fact that enables the above formula is that $G\times H$ contains $K_d$ if and only if each of the factors contains $K_d$. The equivalent statement in which $K_d$ is replaced by some other graph $F$ will, in general, not hold. For instance, $\iota(K_3\times K_3,C_5)=1>0=\iota(K_3,C_5)$. 

The above corollary immediately leads to a bound on $\iota(G\times H)$ by letting $d=2$.
\begin{corollary}
\label{cor:iotadirect}
    If $G$ and $H$ are arbitrary graphs that have no isolated vertices, then  
   $$ \iota(G\times H)\le \min\big\{\iota(G)\gamma_t(H),\iota(H)\gamma_t(G)\big\}.$$
\end{corollary}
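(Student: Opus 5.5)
Corollary~\ref{cor:iotadirect} is the specialization $d=2$ of the preceding corollary, so the plan is simply to substitute and identify the notation. Since $K_2$ is the complete $2$-partite graph $K_{1,1}$, the definitions give $\iota(X,K_2)=\iota(X)$ for every graph $X$. In particular $\iota(G\times H,K_2)=\iota(G\times H)$, $\iota(G,K_2)=\iota(G)$ and $\iota(H,K_2)=\iota(H)$. Setting $d=2$ in the preceding corollary, or equivalently $d=2$ and $n_1=n_2=1$ in Theorem~\ref{thm:direct}, yields exactly
$$\iota(G\times H)\le \min\big\{\iota(G)\gamma_t(H),\iota(H)\gamma_t(G)\big\}.$$

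The corollary needs no hypotheses beyond those already in force: $G$ and $H$ have no isolated vertices, so $\gamma_t(G)$ and $\gamma_t(H)$ are defined and Theorem~\ref{thm:direct} applies. For a self-contained check, I would recall the construction behind the bound. Take a $\gamma_t(G)$-set $A$ and an $\iota(H)$-set $B$, and put $R=V(H)-N_H[B]$. Then $A\times B$ dominates everything except part of $(V(G)-A)\times B_2$ and $V(G)\times R$.

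Both remaining parts contain no edges. First, $(V(G)-A)\times B_2$ is independent, because $B_2$ is independent in $H$. Second, $V(G)\times R$ is edgeless, because $R$ is independent in $H$ and edges of $G\times H$ require an edge in each factor. There are also no edges between these two parts, since $H$ has no edges between $B_2$ and $R$. In this case Theorem~\ref{thm:wood} is not even needed. Exchanging the roles of $G$ and $H$ gives the minimum.

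No step is a real obstacle. The only point needing care is the notational identification $K_2=K_{1,1}$ and $\iota(\cdot,K_2)=\iota(\cdot)$, which is part of the definitions in the preliminaries.
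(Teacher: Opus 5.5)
Your proposal is correct and matches the paper, which obtains this corollary simply by setting $d=2$ (so $K_d=K_2=K_{1,1}$ and $\iota(\cdot,K_2)=\iota(\cdot)$) in the preceding corollary of Theorem~\ref{thm:direct}. Your self-contained check of the construction $A\times B$ is also sound, and you are right that Theorem~\ref{thm:wood} is unnecessary here, since $V(G)\times R$ is edgeless once $R$ is independent in $H$.
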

 We will show that the bound in Corollary~\ref{cor:iotadirect} is widely sharp.

We follow with a lower bound on $\iota(G\times H)$, where $G$ is restricted to graphs that admit a maximum open packing that induces a matching and $H$ is arbitrary. For a large family of graphs satisfying the former condition, recall from~\cite[Theorem 2.2]{bkr-2024} that $\rho^o(X\Box K_2)=2\rho_2(X)$ holds for any bipartite graph $X$, and one can easily derive that $X\Box K_2$ has a maximum open packing inducing a matching. 

\begin{theorem}
\label{thm:directregular}
If $G$ is a graph that has a maximum open packing which induces a matching and $H$ is any graph that has at least one edge, then
$$\iota\bigl(G\times H\bigr)\ge \frac{\rho^o(G)}{2}\cdot\bigg\lceil\frac{2|E(H)|}{\Delta(H)^2}\bigg\rceil.$$
\end{theorem}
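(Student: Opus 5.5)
Let $P$ be a maximum open packing of $G$ that induces a perfect matching on itself, say $P=\{u_1,v_1,\ldots,u_m,v_m\}$ with $u_iv_i\in E(G)$, so that $|P|=2m=\rho^o(G)$. We need to show $|S|\ge m\lceil 2|E(H)|/\Delta(H)^2\rceil$ for every isolating set $S$ of $G\times H$. The plan is to partition the product into $m$ disjoint "layers". For each $i$, consider the edge set $E_i=\{(u_i,h)(v_i,h') : hh'\in E(H)\}$. This is a copy of the edges of $K_2\times H$; it contains $2|E(H)|$ edges, since each edge $hh'$ of $H$ gives the two edges $(u_i,h)(v_i,h')$ and $(u_i,h')(v_i,h)$. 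Since $S$ is isolating, every edge of $E_i$ must have an endpoint in $N[S]$. We will show that each vertex of $S$ can "serve" at most $\Delta(H)^2$ edges of $E_i$ for at most one index $i$. Then $S$ contains at least $\lceil 2|E(H)|/\Delta(H)^2\rceil$ vertices devoted to each $i$, and summing over $i$ gives the bound.

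\emph{Step 1 (disjointness across $i$).} Take $(g,h)\in S$. If it dominates an endpoint $(u_i,y)$ of an edge in $E_i$ through adjacency, then $g\in N_G(u_i)$. Since $P$ is an open packing, the sets $N_G(u_i)$, $N_G(v_i)$ over all $i$ are pairwise disjoint. Hence $g$ is adjacent to at most one vertex of $P$, so $g$ serves at most one index via adjacency. The case where $(g,h)$ itself equals an endpoint, i.e.\ $g\in P$, needs separate care. If $g=u_i$, then $g\in N(v_i)$, and by the packing property $g$ lies in no other $N(x)$ with $x\in P$. So again only index $i$ is involved.

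\emph{Step 2 (counting per layer).} Fix $i$ and $(g,h)\in S$.
\begin{itemize}
\item If $g\in N(u_i)$ but $g\neq v_i$, the covered endpoints are $(u_i,y)$ with $y\in N_H(h)$. Each such vertex lies in at most $\deg_H(y)\le\Delta(H)$ edges of $E_i$, giving at most $\Delta(H)^2$ edges. The case $g\in N(v_i)$, $g\neq u_i$, is symmetric.
\item If $g=v_i$, then $(v_i,h)$ is itself an endpoint, and it also dominates the vertices $(u_i,y)$ with $y\in N_H(h)$. Here the count is delicate: we get $\deg(h)$ edges at $(v_i,h)$ plus up to $\Delta(H)^2$ more.
\end{itemize}

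\emph{Main obstacle.} The hardest step is this last case, where $g\in P$. The edges at $(v_i,h)$ go to $(u_i,y)$ with $y\in N_H(h)$, which are exactly the vertices already dominated. So these edges are already counted among the edges incident to $\{(u_i,y):y\in N_H(h)\}$. The total is therefore still at most $\deg(h)\Delta(H)\le\Delta(H)^2$. I expect to verify this carefully. I also need to handle vertices with $g$ in both $N(u_i)$ and $N(v_i)$, a triangle through $u_i$ and $v_i$. The packing property rules this out, since $u_i\in N(v_i)$ and $v_i\in N(u_i)$ already share the pair's neighbourhoods disjointly: $N(u_i)\cap N(v_i)=\emptyset$.

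Once each vertex of $S$ covers at most $\Delta(H)^2$ edges of a single $E_i$, each $E_i$ needs at least $\lceil 2|E(H)|/\Delta(H)^2\rceil$ vertices of $S$, and these sets of vertices are disjoint for different $i$. Summing over the $m=\rho^o(G)/2$ indices gives the stated bound.
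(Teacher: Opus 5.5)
Your proposal is correct and is essentially the paper's proof. The paper likewise counts the $2|E(H)|$ edges between the fibers ${}^{u_i}\!H$ and ${}^{v_i}\!H$, assigns to index $i$ the vertices of $S$ in $(N_G(u_i)\cup N_G(v_i))\times V(H)$ (pairwise disjoint by the open packing property), and bounds each such vertex's contribution by $\Delta(H)^2$, with the case $x\in\{u_i,v_i\}$ handled by the same overlap count you give, namely $\Delta+\Delta(\Delta-1)$.
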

\begin{proof}
Let $G$ be a graph whose maximum open packing $P$ induces a matching. Let $P=\{u_1,v_1,\ldots,u_k,v_k\}$ such that $u_iv_i\in E(G)$ for all $i\in [k]$ and note that $\rho^{o}(G)=2k$. Let $\Delta=\Delta(H)$, 
and $S$ an arbitrary isolating set of $G\times H$.

For each $i\in[k]$, let $$B_i=(N_G(u_i)\cup N_G(v_i))\times V(H).$$
and set $S_i=B_i\cap S$.

Note that $N[S]$ is a vertex cover of $G\times H$, hence every edge in $G\times H$ is incident with a vertex in $N[S]$. In particular, every edge between the fibers  $^{u_i}\!H$ and $^{v_i}\!H$ is incident with a vertex from $N[S]$, more precisely with a vertex from $N[S_i]$. Let $M_i$ be the set of edges between the fibers  $^{u_i}\!H$ and $^{v_i}\!H$.
For a vertex $(x,y)\in S_i$ let us count the number of edges in $M_i$ that are incident with a vertex in $N[(x,y)]$. If $x\notin \{u_i,v_i\}$, then we may assume without loss of generality, that $x\in N_G(u_i)$. Note that 
$N[(x,y)]$ contains at most $\Delta$ vertices in $^{u_i}\!H$, which are altogether  incident with at most $\Delta^2$ edges in $M_i$. On the other hand, if $x\in \{u_i,v_i\}$,  say $x=u_i$, then $(x,y)$ itself is incident with at most $\Delta$ edges in $M_i$, while every $(v_i,h)\in N(x,y)\cap\, ^{v_i}\!H$ is incident with at most $\Delta-1$ additional edges in $M_i$ (that are not incident with $(x,y)$). Altogether the vertices in $N[(x,y)]$ are incident with at most $\Delta+\Delta(\Delta-1)$ edges in $M_i$. Hence, in either case for a vertex $(x,y)\in S_i$ there are at most $\Delta^2$ edges in $M_i$ incident with a vertex in $N[(x,y)]$. Therefore, $$|S_i|\Delta^2\ge |M_i|=\sum_{h\in V(H)}{\deg_H(h)}=2|E(H)|, $$
which implies $|S_i|\ge \frac{2|E(H)|}{\Delta^2}$.  Since $|S_i|$ is an integer, we get  $|S_i|\ge \Big\lceil\frac{2|E(H)}{\Delta^2}\Big\rceil.$
Therefore, $$\iota(G\times H)=|S|=\sum_{i=1}^{k}{|S_i|}\ge \frac{\rho^{o}(G)}{2}\bigg\lceil\frac{2|E(H)|}{\Delta^2}\bigg\rceil,$$
as claimed.
\end{proof}

From the above theorem, we immediately get the following corollary for the case when $H$ is a regular graph.
\begin{corollary}
\label{cor:directregular}
If $G$ is a graph whose maximum open packing induces a matching and $H$ is an $r$-regular graph for a positive integer $r$, then
$$\iota\bigl(G\times H\bigr)\ge \frac{\rho^o(G)}{2} \bigg\lceil\frac{n(H)}{r}\bigg\rceil.$$
\end{corollary}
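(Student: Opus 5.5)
This corollary is a direct specialization of Theorem~\ref{thm:directregular}, so the plan is to check its hypotheses and then evaluate the bound for regular $H$.

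First, the hypotheses. The condition on $G$, that it has a maximum open packing inducing a matching, is assumed verbatim. Since $H$ is $r$-regular with $r\ge 1$, it has at least one edge. Hence Theorem~\ref{thm:directregular} applies and gives
$$\iota(G\times H)\ge \frac{\rho^o(G)}{2}\cdot\bigg\lceil\frac{2|E(H)|}{\Delta(H)^2}\bigg\rceil .$$

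Next, the arithmetic. In an $r$-regular graph, $\Delta(H)=r$. By the handshake lemma, $2|E(H)|=\sum_{h}\deg_H(h)$. If degrees are counted as the number of neighbours, this sum is $r\,n(H)$, and so
$$\frac{2|E(H)|}{\Delta(H)^2}=\frac{r\,n(H)}{r^2}=\frac{n(H)}{r}.$$
Substituting this into the bound from Theorem~\ref{thm:directregular} yields exactly the stated inequality.

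The only point needing care is a notational slip in the preliminaries: there $\deg_G(v)$ is written as $|N_G[v]|$. Read literally, this would make $2|E(H)|=\sum_h \deg_H(h)$ false and would also break the count $|M_i|=\sum_h \deg_H(h)$ used in the proof of Theorem~\ref{thm:directregular}. The proof of that theorem clearly uses $\deg_H(h)=|N_H(h)|$: the edges between ${}^{u_i}\!H$ and ${}^{v_i}\!H$ correspond to ordered pairs $(h,h')$ with $hh'\in E(H)$. I would therefore adopt the open-neighbourhood reading of degree. Under that reading the computation above goes through, and the corollary follows immediately.
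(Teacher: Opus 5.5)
Your proposal is correct and is the paper's own argument: the corollary follows from Theorem~\ref{thm:directregular} by substituting $\Delta(H)=r$ and $2|E(H)|=r\,n(H)$. You are also right that the preliminaries' definition $\deg_G(v)=|N_G[v]|$ is a typo, since the proof of that theorem (via $|M_i|=\sum_h \deg_H(h)=2|E(H)|$) uses the open-neighbourhood degree.
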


Combining Corollaries~\ref{cor:iotadirect} and~\ref{cor:directregular} we obtain the following formula for the isolation number of direct products of paths divisible by $4$ and odd cycles.

\begin{corollary} \label{cor:sharp}
If $k$ and $\ell$ are positive integers, then
$$\iota\bigl(P_{4\ell}\times C_{2k+1}\bigr)=\ell(k+1).
$$
\end{corollary}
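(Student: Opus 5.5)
Upper bound: use Corollary~\ref{cor:iotadirect} in the form $\iota(P_{4\ell}\times C_{2k+1})\le \iota(C_{2k+1})\gamma_t(P_{4\ell})$. Two standard values are needed. First, $\gamma_t(P_{4\ell})=2\ell$. Indeed, the vertices $p_2,p_3,p_6,p_7,\ldots,p_{4\ell-2},p_{4\ell-1}$ totally dominate $P_{4\ell}$. The matching lower bound follows because the open neighbourhoods $N(p_1),N(p_4),N(p_5),N(p_8),\ldots$ are pairwise disjoint. Second, $\iota(C_n)=\lceil n/5\rceil$ (known from Caro and Hansberg). This gives only $\lceil (2k+1)/5\rceil\cdot 2\ell$, which is not the target value. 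So the bound in the other order should be tried instead: $\iota(P_{4\ell})\gamma_t(C_{2k+1})$. We have $\iota(P_{4\ell})=\lceil 4\ell/5\rceil$, and $\gamma_t(C_{2k+1})=\lfloor (2k+1)/2\rfloor+\lceil\cdot\rceil$-type value, roughly $k+1$. This does not give $\ell(k+1)$ either.

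Since neither order of Corollary~\ref{cor:iotadirect} is sharp here, I would not use it. Instead I would build an isolating set directly, using the following structure. The direct product $P_{4\ell}\times C_{2k+1}$ is connected, because $C_{2k+1}$ is non-bipartite. It splits into $\ell$ blocks of four consecutive $C$-fibers, over $p_{4i-3},\ldots,p_{4i}$. In each block, take vertices in the fiber over $p_{4i-2}$ (or $p_{4i-1}$) forming a total dominating set $D$ of $C_{2k+1}$, of size about $k+1$. A vertex $(p_{4i-2},c)$ dominates $\{p_{4i-3},p_{4i-1}\}\times N(c)$. Hence $\{p_{4i-2}\}\times D$ dominates the whole fibers over $p_{4i-3}$ and $p_{4i-1}$. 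The edges not yet covered lie inside the fibers over $p_{4i-2}$ and $p_{4i}$, and these fibers are independent sets. Every edge joins fibers over adjacent path vertices, and every such pair contains a fiber over some $p_{4i-3}$ or $p_{4i-1}$, which is fully dominated. So every edge meets $N[S]$. The size of $S$ is $\ell\cdot\gamma_t(C_{2k+1})$, and this should be checked against $k+1$. For $C_{2k+1}$ we have $\gamma_t=\lfloor n/2\rfloor+\lceil n/4\rceil-\lfloor n/4\rfloor$, which equals $k+1$ exactly when $2k+1\not\equiv 0\pmod 4$, and this always holds for odd $n$. So the construction gives $\ell(k+1)$. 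This argument is really the proof of Theorem~\ref{thm:direct} with the roles of the factors arranged as $A=\{p_{4i-2},p_{4i-1}\}$-style, but done by hand. Alternatively, the upper bound can be recovered from Corollary~\ref{cor:iotadirect} if one notes $\iota(P_{4\ell})\le\ell$ is not what is used. I would present the direct construction.

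Lower bound: apply Corollary~\ref{cor:directregular} with $G=P_{4\ell}$ and $H=C_{2k+1}$, where $r=2$. The set $\{p_1,p_2,p_5,p_6,\ldots,p_{4\ell-3},p_{4\ell-2}\}$ is an open packing inducing a matching, of size $2\ell$. It is maximum, since $\rho^o(P_{4\ell})\le\gamma_t(P_{4\ell})=2\ell$. The corollary then gives $\iota\ge \ell\lceil(2k+1)/2\rceil=\ell(k+1)$.

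The main obstacle is the upper bound. One has to check carefully that the fibers left undominated, over $p_{4i-2}$ and $p_{4i}$, induce no edges. This holds because each fiber is independent and no two of these fibers lie over adjacent path vertices. One also has to verify that $\gamma_t(C_{2k+1})=k+1$.
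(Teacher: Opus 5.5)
Your proof is correct and is essentially the paper's. The lower bound via Corollary~\ref{cor:directregular} is identical, and your check that $\{p_1,p_2,p_5,p_6,\ldots\}$ is a maximum open packing inducing a matching is a detail the paper omits. Your set $\bigcup_i\{p_{4i-2}\}\times D$ is exactly the set built in the proof of Theorem~\ref{thm:direct}, with $\{p_2,p_6,\ldots,p_{4\ell-2}\}$ as an $\iota(P_{4\ell})$-set and $D$ as a $\gamma_t(C_{2k+1})$-set.

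Your stated reason for bypassing Corollary~\ref{cor:iotadirect} is false, however, and should be removed. In fact $\iota(C_n)=\lceil n/4\rceil$, not $\lceil n/5\rceil$, and $\iota(P_{4\ell})=\ell$, not $\lceil 4\ell/5\rceil$; the projection of your own set already shows $\iota(P_{4\ell})\le\ell$. Hence $\iota(P_{4\ell})\gamma_t(C_{2k+1})=\ell(k+1)$, and the corollary gives the upper bound at once, which is how the paper argues. With your values the corollary would even contradict your own lower bound, e.g.\ $2\ell<3\ell$ for $k=2$. Finally, the remark about ``$A=\{p_{4i-2},p_{4i-1}\}$'' confuses a total dominating set of the path with the isolating set of the path that you actually use.
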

\begin{proof}
By Corollary~\ref{cor:iotadirect}, we have $$\iota(P_{4\ell}\times C_{2k+1})\le \min\big\{\iota(P_{4\ell})\gamma_t(C_{2k+1}),\iota(C_{2k+1})\gamma_t(P_{4\ell})\big\}=\ell(k+1).$$
By Corollary~\ref{cor:directregular},
$$\iota(P_{4\ell}\times C_{2k+1})\ge \ell\bigg\lceil\frac{2k+1}{2}\bigg\rceil=\ell(k+1),$$
which proves the result.
\end{proof}

Corollary~\ref{cor:sharp} shows that the upper bound in Corollary~\ref{cor:iotadirect} is sharp.  Even more is true as the following observation shows.
 
\begin{observation}
 For any $x\in\mathbb{N}$ and any integer $y\ge 2$ there exist graphs $G$ and $H$ such that $\iota(G)=x, \gamma_t(H)=y$ and $\iota(G\times H)=xy.$
\end{observation}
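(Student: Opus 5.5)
The plan is to obtain the observation as a direct specialization of Corollary~\ref{cor:sharp}, choosing $G$ to be a path and $H$ an odd cycle. Given $x\in\mathbb{N}$ and an integer $y\ge 2$, I would take $G=P_{4x}$ and $H=C_{2y-1}$. The cycle is well defined since $2y-1\ge 3$, and this is exactly where the hypothesis $y\ge 2$ is used. Setting $\ell=x$ and $k=y-1$ (a positive integer) in Corollary~\ref{cor:sharp} gives
$$\iota(G\times H)=\ell(k+1)=xy,$$
so what remains is to check that the factors have the prescribed invariants.

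First I will verify $\iota(P_{4x})=x$. For the upper bound, partition $P_{4x}$ into $x$ consecutive blocks of four vertices and pick the second vertex of each block. The closed neighbourhood of this vertex contains the first three vertices of its block. The only vertex of the block not in that neighbourhood is the fourth one, and its two incident edges lead to the third vertex of its own block and to the first vertex of the next block. Both of these lie in $N[S]$, so no edge remains outside $N[S]$ and $\iota(P_{4x})\le x$.

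For the lower bound I will use the known formula $\iota(P_n)=\lfloor (n+1)/4\rfloor$, which gives $\lfloor (4x+1)/4\rfloor = x$. Alternatively, a vertex of a path has at most three vertices in its closed neighbourhood, and those meet at most four of the $4x-1$ edges, which yields $\iota(P_{4x})\ge \lceil (4x-1)/4\rceil = x$.

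Next I will verify $\gamma_t(C_{2y-1})=y$, using that $\gamma_t(C_n)=\lceil n/2\rceil$ for odd $n$. The lower bound holds because each vertex totally dominates exactly two vertices, so $\gamma_t(C_n)\ge \lceil n/2\rceil = y$. The upper bound comes from choosing consecutive pairs of adjacent vertices spaced appropriately around the cycle, which gives a total dominating set of size $y$.

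These checks are the only step that requires any care. The genuine content, namely the matching lower bound via open packings in Theorem~\ref{thm:directregular}, is already contained in Corollary~\ref{cor:sharp}. If $\mathbb{N}$ is meant to include $0$, I would treat $x=0$ separately by taking $G$ edgeless. Then $G\times H$ is also edgeless, so $\iota(G)=0$ and $\iota(G\times H)=0=xy$.
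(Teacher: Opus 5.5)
Your proposal is correct and matches the paper, which likewise takes $G=P_{4x}$ and $H=C_{2y-1}$ and reads off $\iota(G\times H)=xy$ from Corollary~\ref{cor:sharp}, treating $\iota(P_{4x})=x$ and $\gamma_t(C_{2y-1})=y$ as known. One harmless slip: the general path formula is $\iota(P_n)=\lceil (n-1)/4\rceil$, not $\lfloor (n+1)/4\rfloor$ (for example, $\iota(P_6)=2$), but the two agree at $n=4x$, and your edge-counting argument (each closed neighbourhood meets at most four edges, and $N[S]$ must be a vertex cover) settles the lower bound on its own.
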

Note that equality in the above observation is obtained by letting $G=P_{4x}$ and $H=C_{2y-1}$.  

\section{Strong product}
\label{sec:str} 

We start this section with a lower bound on the isolation number of the strong product of two graphs. Compare it with~\cite[Theorem 3.6]{iso}, where the same lower bound was proved for the Cartesian product of two graphs. The proof also follows similar lines.  

Let $X$ and $Y$ be graphs, where $V(X)=\{x_1,\ldots,x_n\}$. The {\em corona} $X\odot Y$ is obtained from a copy of $X$ and $n$ copies of $Y$ by connecting all vertices of the $i^{\rm th}$ copy of $Y$ with vertex $x_i$.  
\begin{theorem}
\label{thm:strongupper}
If $\cF$ is a family of graphs, and $G$ and $H$ are arbitrary graphs, then
$$\iota(G\,\boxtimes\, H,\cF)\ge \max\{\rho_2(G)\iota(H,\cF),\rho_2(H)\iota(G,\cF)\},$$
and the bound is sharp for any family $\cF$.
\end{theorem}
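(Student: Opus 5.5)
By symmetry of the strong product it suffices to prove $\iota(G\boxtimes H,\cF)\ge \rho_2(G)\,\iota(H,\cF)$. Let $P=\{g_1,\ldots,g_k\}$ be a maximum $2$-packing of $G$, so $k=\rho_2(G)$, and let $S$ be an arbitrary $\cF$-isolating set of $G\boxtimes H$. For each $i\in[k]$ put
$$S_i=S\cap \bigl(N_G[g_i]\times V(H)\bigr).$$
Since $P$ is a $2$-packing, the closed neighborhoods $N_G[g_i]$ are pairwise disjoint, so the sets $S_i$ are pairwise disjoint. Hence $|S|\ge\sum_i |S_i|$, and it is enough to show $|S_i|\ge \iota(H,\cF)$ for each $i$.

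Fix $i$ and consider the $H$-fiber $^{g_i}\!H$, which induces a copy of $H$ in $G\boxtimes H$. A vertex $(x,y)\in S$ has a closed neighbor in $^{g_i}\!H$ only if $x\in N_G[g_i]$, so only vertices of $S_i$ dominate anything in this fiber. For $(x,y)\in S_i$, the vertices of $^{g_i}\!H$ it dominates are exactly $\{g_i\}\times N_H[y]$; this holds both when $x=g_i$ and when $x$ is a neighbor of $g_i$, which is precisely where the strong product, unlike the Cartesian one, contributes the extra diagonal edges. Therefore
$$N_{G\boxtimes H}[S]\cap {}^{g_i}\!H=\{g_i\}\times N_H[p_H(S_i)].$$
Now $^{g_i}\!H-N[S]$ is an induced subgraph of $G\boxtimes H-N[S]$, so it contains no member of $\cF$. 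Identifying the fiber with $H$, this says that $H-N_H[p_H(S_i)]$ contains no member of $\cF$. Thus $p_H(S_i)$ is an $\cF$-isolating set of $H$, and $|S_i|\ge |p_H(S_i)|\ge\iota(H,\cF)$. Summing over $i$ gives the bound.

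For sharpness, take $G=K_1$, so that $\rho_2(G)=1$ and $G\boxtimes H\cong H$. Equality then holds trivially for every $\cF$, since the maximum is attained by $\rho_2(K_1)\iota(H,\cF)=\iota(H,\cF)$. If a nontrivial family is wanted, a natural candidate is $G=X\odot K_1$ or a similar corona, where the $2$-packing formed by the leaves' supports and a product of an $\iota(H,\cF)$-set with a $\gamma$-set of $G$ could match; the trivial example already establishes sharpness for every family.

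The only delicate step is the neighborhood computation in the fiber, namely that neighbors $x$ of $g_i$ project the full closed neighborhood $N_H[y]$. This holds by the definition of the strong product, because $E(G\boxtimes H)$ contains both the Cartesian edges and the direct-product edges.
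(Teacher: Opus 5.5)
Your proof of the inequality is correct and is essentially the paper's argument. The paper argues by contradiction: since the sets $N_G[x]\times V(H)$ for $x$ in a maximum $2$-packing are disjoint, some $x$ has $|(N_G[x]\times V(H))\cap D|<\iota(H,\cF)$, and this yields an undominated copy of a member of $\cF$ inside ${}^{x}\!H$. You instead sum the bounds $|S_i|\ge\iota(H,\cF)$ directly. Your identity $N[S]\cap {}^{g_i}\!H=\{g_i\}\times N_H[p_H(S_i)]$ is exactly the step the paper uses implicitly.

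The real difference is in sharpness. Your example $G=K_1$ is formally valid: the maximum is at least $\rho_2(K_1)\iota(H,\cF)=\iota(H,\cF)$ and at most $\iota(K_1\boxtimes H,\cF)=\iota(H,\cF)$. It is, however, degenerate. Since $K_1\boxtimes H\cong H$, the claim reduces to a tautology about $H$ and says nothing about whether genuine products attain the bound.

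The paper takes $G=H=X\odot F$ with $F\in\cF$. Every $\cF$-isolating set must meet each of the pairwise disjoint sets $V(F_i)\cup\{x_i\}$, so $\iota(G,\cF)=\gamma(G)=\rho_2(G)=n(X)$. Moreover, $V(X)\times V(X)$ dominates $G\boxtimes H$ because $N[(a,b)]=N[a]\times N[b]$ in the strong product. Hence $\iota(G\boxtimes H,\cF)=n(X)^2$.

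Your tentative nontrivial candidate would not work in general. A product $A\times B$ of a $\gamma(G)$-set and an $\iota(H,\cF)$-set leaves $V(G)\times R$ undominated, where $R=V(H)-N_H[B]$. The graph $G\boxtimes H[R]$ can contain members of $\cF$ that $H[R]$ does not; for instance, $K_2\boxtimes K_2\cong K_4$ when $\cF=\{K_4\}$. Also, in $X\odot K_1$ it is the leaves, not their supports, that form a $2$-packing.

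The paper's family works for every $\cF$ because of two ingredients. Attaching copies of a member of $\cF$ itself pushes $\iota(\cdot,\cF)$ up to $\gamma$. Using a dominating set of the product isolates regardless of $\cF$.
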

\begin{proof}
By symmetry, it suffices to prove that $\iota(G\,\boxtimes\, H,\cF)\ge \rho_2(G)\iota(H,\cF)$.  Suppose to the contrary that $\iota(G\,\boxtimes\, H,\cF)< \rho_2(G)\iota(H,\cF)$. Let $D$ be an $\iota(G\,\boxtimes\, H,\cF)$-set and let $A$ be a $\rho_2(G)$-set. Thus, there exists a vertex $x\in A$ such that $$|(N_G[x]\times V(H))\cap D|<\iota(H,\cF).$$
Let $D'=p_H\big((N_G[x]\times V(H))\cap D\big)$, and note that $|D'|<\iota(H,\cF)$. Therefore, there exists a set of vertices $L$ in $V(H)-N_H[D']$ such that $H[L]\in \cF$. We in turn infer that $\{x\}\times L\subset V(G\boxtimes H)-N[D]$ and $\{x\}\times L$ induces a graph in $\cF$, which is a contradiction to $D$ being an $\cF$-isolating set in $G\boxtimes H$.

For the sharpness of the bound, let $F\in \cF$ be an arbitrarily chosen graph from the family. Let $G=H=X\odot F$, where $X$ is an arbitrary graph. It can be verified that $\iota(G,\cF)=\gamma(G)=\rho_2(G)=n(X)$. Letting $A$ be a $\gamma(G)$-set, it is clear that $A\times A$ is a dominating set of $G\boxtimes H$. Hence, $$\rho_2(G)\iota(H,\cF)\le \iota(G\boxtimes H,\cF)\le \gamma(G)\gamma(H)=\rho_2(G)\iota(H,\cF),$$
where the first inequality follows from the statement of the theorem.
\end{proof}

Next, we present an upper bound on $\iota(G\boxtimes H)$. 
Given a graph $G$ let ${\cal I}(G)=\{S:\, S \textrm{ is an }\iota(G)\textrm{-set}\}.$ Let $$r_{\iota}(G)=\min\{|R|:\,R=V(G)-N[S],\textrm{ where }S\in {\cal I}(G)\}.$$
That is, $r_{\iota}(G)$ is the minimum number of vertices that remain undominated by an $\iota(G)$-set.
\begin{proposition}
\label{prp:stronglower}  
If $G$ and $H$ are arbitrary graphs, then $$\iota(G\boxtimes H)\le \iota(G)\iota(H)+r_{\iota}(G)\iota(H)+r_{\iota}(H)\iota(G),$$
and the bound is sharp.
\end{proposition}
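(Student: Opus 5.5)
Take an $\iota(G)$-set $S_G$ with $R_G=V(G)-N_G[S_G]$ and $|R_G|=r_\iota(G)$, and an $\iota(H)$-set $S_H$ with $R_H=V(H)-N_H[S_H]$ and $|R_H|=r_\iota(H)$. By definition, $R_G$ and $R_H$ are independent sets in $G$ and $H$. We take
$$D=(S_G\times S_H)\cup (R_G\times S_H)\cup (S_G\times R_H),$$
which has cardinality at most $\iota(G)\iota(H)+r_\iota(G)\iota(H)+r_\iota(H)\iota(G)$. It then suffices to show that $D$ is an isolating set of $G\boxtimes H$, i.e., that $V(G\boxtimes H)-N[D]$ is independent.

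In the strong product, $N[(g,h)]=N_G[g]\times N_H[h]$, so $N[A\times B]=N_G[A]\times N_H[B]$. Therefore
$$N[D]\supseteq \bigl(N_G[S_G]\times N_H[S_H]\bigr)\cup\bigl(N_G[R_G]\times N_H[S_H]\bigr)\cup\bigl(N_G[S_G]\times N_H[R_H]\bigr).$$
Since $N_G[S_G]\cup R_G=V(G)$, the first two terms together cover $V(G)\times N_H[S_H]$. Likewise, the first and third terms together cover $N_G[S_G]\times V(H)$. Hence every undominated vertex $(g,h)$ satisfies $g\in R_G$ and $h\in R_H$, so $V(G\boxtimes H)-N[D]\subseteq R_G\times R_H$.

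Next, $R_G\times R_H$ is independent in $G\boxtimes H$. Two distinct vertices $(g,h),(g',h')$ of this set are adjacent in the strong product only if $g=g'$ or $gg'\in E(G)$, and $h=h'$ or $hh'\in E(H)$, with at least one of these relations being an edge. Since $R_G$ and $R_H$ are independent, neither $gg'$ nor $hh'$ can be an edge, so the two vertices would have to coincide. This proves the inequality.

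For sharpness, I would try factors where every $\iota$-set dominates, so $r_\iota=0$, and the bound reduces to $\iota(G)\iota(H)$. We then need a matching lower bound, which Theorem~\ref{thm:strongupper} supplies with $\cF=\{K_2\}$. Taking $G=H=X\odot K_2$ (the construction from that proof with $F=K_2$), we have $\iota(G)=\gamma(G)=\rho_2(G)=n(X)$, the $\gamma(G)$-set $V(X)$ leaves no vertex undominated, so $r_\iota(G)=0$, and the lower bound $\rho_2(G)\iota(H)=n(X)^2$ equals the upper bound. The main obstacle is only this sharpness check; if one prefers an example with $r_\iota>0$ being tight, that would need a more delicate construction, but the statement only asks for sharpness.
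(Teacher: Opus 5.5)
Your proposal is correct and follows essentially the same route as the paper: the same set $(S_G\times S_H)\cup(R_G\times S_H)\cup(S_G\times R_H)$, the undominated vertices confined to the independent set $R_G\times R_H$, and the same sharpness example $G=H=X\odot K_2$ (where $r_\iota=0$) matched by the lower bound of Theorem~\ref{thm:strongupper}. Your use of $N[A\times B]=N_G[A]\times N_H[B]$ only spells out a step the paper states without detail; the paper claims equality $V(G\boxtimes H)-N[S]=R_A\times R_B$, whereas the containment you prove is enough.
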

\begin{proof}
Let $A$ be an $\iota(G)$-set with $R_A=V(G)-N_G[A]$ and $|R_A|=r_{\iota}(G)$, and let $B$ be an $\iota(H)$-set with $R_B=V(H)-N_G[B]$ and $|R_B|=r_{\iota}(H)$. Let $$S=(A\times B)\cup (A\times R_B)\cup (R_A\times B).$$
Note that $V(G\boxtimes H)-N[S]=R_A\times R_B$, and $R_A\times R_B$ is an independent set in $G\boxtimes H$. Therefore $S$ is an isolating set, and $|S|=\iota(G)\iota(H)+r_{\iota}(G)\iota(H)+r_{\iota}(H)\iota(G).$

For the sharpness of the bound, one can take a family from the proof of Theorem~\ref{thm:strongupper}. More precisely, letting $G=H=X\odot K_2$, where $X$ is an arbitrary graph, we get $\iota(G)=\gamma(G)=\rho_2(G)=n(X)$. In addition, $r_{\iota}(G)=0$, thus the upper bound simplifies to $\iota(G\boxtimes H)\le \iota(G)\iota(H)$.  This inequality holds as equality since $$\iota(G\boxtimes H)\ge \rho_2(G)\iota(H)=\iota(G)\iota(H),$$
where the first inequality follows from  Theorem~\ref{thm:strongupper}.
\end{proof}

\section{Lexicographic product}
\label{sec:lex}

In this section we consider the $\cF$-isolation number of lexicographic products. When $\iota(H,\cF)\ge 1$, we extend the results for $\iota(G\circ H)$ from~\cite{iso} to $\iota(G\circ H,\cF)$ for an arbitrary family $\cF$. The case when $\iota(H,\cF)=0$ is more challenging, since the structural properties of the graphs from family $\cF$ within $G\circ H$ play an important role in determining  $\iota(G\circ H,\cF)$. For this reason, when $\iota(H,\cF)=0$    we focus on the case when ${\cal F}=\{K_n\}$.

Recall that $\omega(G\circ H)=\omega(G)\omega(H)$, and there exists a maximum  complete subgraph in $G\circ H$ induced by $A\times B$, where $A$ and $B$ induce maximum complete subgraphs in $G$ and $H$, respectively.

\begin{lemma}
If $G$ and $H$ are nontrivial graphs such that $k=\omega(H)$, then $G\circ H$ has a complete subgraph of order $n$ if and only if $\omega(G)\ge \frac{n}{k}$.
\end{lemma}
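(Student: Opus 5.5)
The plan is to reduce the statement to the identity $\omega(G\circ H)=\omega(G)\omega(H)$ recalled just before the lemma. Since $K_n$ is a subgraph of $G\circ H$ exactly when $\omega(G\circ H)\ge n$, the lemma amounts to $\omega(G)k\ge n$, that is, $\omega(G)\ge n/k$. I will nevertheless prove both directions directly, so that the argument does not lean on the recalled fact.

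\emph{Sufficiency.} Suppose $\omega(G)\ge n/k$. Let $A$ induce a complete subgraph of $G$ of order $\omega(G)$, and let $B$ induce a complete subgraph of $H$ of order $k$. Take two distinct vertices $(g,h),(g',h')\in A\times B$.
\begin{itemize}
\item If $g\ne g'$, then $gg'\in E(G)$, so the two vertices are adjacent in $G\circ H$.
\item If $g=g'$, then $h\ne h'$ and $hh'\in E(H)$, so they are again adjacent.
\end{itemize}
Hence $A\times B$ induces a complete subgraph of order $\omega(G)k\ge n$, and it contains $K_n$.

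\emph{Necessity.} Let $Q$ be the vertex set of a complete subgraph of order $n$ in $G\circ H$. I will establish two facts.
\begin{enumerate}
\item $p_G(Q)$ induces a complete subgraph of $G$. If $(g,h),(g',h')\in Q$ with $g\ne g'$, their adjacency in $G\circ H$ forces $gg'\in E(G)$.
\item For each $g\in p_G(Q)$, the fiber part $Q\cap {}^{g}\!H$ projects injectively onto a set inducing a complete subgraph of $H$. Indeed, two vertices of $Q$ in the same $H$-fiber are adjacent only if their $H$-coordinates are adjacent in $H$.
\end{enumerate}
By the second fact, $|Q\cap {}^{g}\!H|\le k$ for every $g\in p_G(Q)$. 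Summing over the fibers and using the first fact gives
\[
n=|Q|\le |p_G(Q)|\cdot k\le \omega(G)\,k,
\]
so $\omega(G)\ge n/k$.

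There is no real obstacle here; the only point needing care is the necessity count, where the vertices of $Q$ are split according to their $H$-fibers. The nontriviality hypothesis on $G$ and $H$ is not used.
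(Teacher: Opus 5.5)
Your proof is correct and follows essentially the paper's approach. The paper gives no separate proof and treats the lemma as an immediate consequence of the recalled fact that $\omega(G\circ H)=\omega(G)\omega(H)$, attained by $A\times B$ for maximum cliques $A$ of $G$ and $B$ of $H$; your two directions verify that fact directly, via the product-of-cliques construction and the fiberwise count $|Q|\le |p_G(Q)|\cdot k$.
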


\begin{proposition}
\label{prp:lex}
   Let $G$ and $H$ be nontrivial graphs with $k=\omega(H)$ and $n\ge 2$.  If $\iota(H,K_n)=0$, then $$\iota(G \circ H,K_n)=\iota(G,K_r),$$ where $r=\lceil\frac{n}{k}\rceil$.
\end{proposition}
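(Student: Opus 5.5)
We prove the two inequalities separately, using the preceding lemma: a vertex set $Y\subseteq V(G)$ and any subsets of the corresponding $H$-fibers give a subgraph of $G\circ H$ of the form $G[Y]\circ H$. The lemma, applied to induced subgraphs, says that $G[Y]\circ H$ contains $K_n$ if and only if $G[Y]$ contains $K_r$ with $r=\lceil n/k\rceil$. Note that $\iota(H,K_n)=0$ means $H$ has no $K_n$, so $k<n$ and hence $r\ge 2$.

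\emph{Upper bound.} Let $S$ be an $\iota(G,K_r)$-set, and for each $g\in S$ choose an arbitrary $h_g\in V(H)$. Put $S'=\{(g,h_g):g\in S\}$. In $G\circ H$ the vertex $(g,h_g)$ dominates the whole fiber ${}^{g'}\!H$ for every $g'\in N_G(g)$. Hence
$$V(G\circ H)-N[S'] \subseteq \bigl(R\times V(H)\bigr)\cup\bigl(S\times V(H)\bigr) \quad\text{with } R=V(G)-N_G[S],$$
and more precisely the undominated part is $R\times V(H)$ together with part of each fiber ${}^{g}\!H$, $g\in S$. Suppose the undominated part contained a $K_n$, and let $Y$ be its projection onto $G$; then $G[Y]$ contains a clique $C$ with $|C|\ge r$.

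\begin{itemize}
\item If $C$ contained some $g\in S$, it would also contain another vertex, since $r\ge 2$. That vertex is adjacent to $g$, so it lies in $N_G[S]-S$ or in $S$ itself. If it lies in $N_G[S]-S$, its fiber is fully dominated, which is impossible. If it is another $g'\in S$ adjacent to $g$, then $g'$ dominates the fiber over $g$, so again the clique cannot use that fiber.
\item So $C\subseteq R$, and then $G[R]$ contains $K_r$, contradicting the choice of $S$.
\end{itemize}

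This gives $\iota(G\circ H,K_n)\le\iota(G,K_r)$. The one subtle case is when $g\in S$ is isolated in $G[S]$: its fiber is dominated only partially, but any $K_n$ meeting it must also use a neighboring fiber, which is dominated.

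\emph{Lower bound.} Let $D$ be an $\iota(G\circ H,K_n)$-set and set $S=p_G(D)$, so $|S|\le|D|$. We claim $S$ is a $K_r$-isolating set of $G$. Suppose $C\subseteq V(G)-N_G[S]$ induces $K_r$. Every vertex of $C$ is at distance at least $2$ from $S$. Since a vertex $(g,h)$ of $G\circ H$ dominates only vertices in ${}^{g'}\!H$ with $g'\in N_G[g]$, the fibers over $C$ are entirely undominated by $D$. Then $C\times V(H)$ induces $K_r\circ H$, which contains $K_{rk}\supseteq K_n$ because $rk\ge n$. This contradicts the choice of $D$, so $\iota(G,K_r)\le|S|\le\iota(G\circ H,K_n)$.

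The main obstacle is the upper-bound step: arguing that partially dominated fibers over $S$ cannot host part of a $K_n$. This is exactly where we need $r\ge 2$, which follows from $\iota(H,K_n)=0$.
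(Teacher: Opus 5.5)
Your proof is correct and follows the paper's argument: for the upper bound, place one vertex in each $H$-fiber over a $K_r$-isolating set $S$ of $G$; for the lower bound, project an optimal set onto $G$ and use that $K_r\circ H$ contains $K_{rk}\supseteq K_n$. The only difference is in checking the upper bound: the paper writes the undominated part as a disjoint union of $(G-N_G[S])\circ H$ and copies of $H-N_H[h]$ and bounds clique numbers by $\max\{(r-1)k,k\}<n$, whereas you project a hypothetical undominated $K_n$ onto $G$ and use $r\ge 2$ (equivalent to $k<n$).
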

\begin{proof}
Let $S$ be an $\iota(G,K_r)$-set. Take an arbitrary $h\in V(H)$, and let $D=S\times \{h\}$. Note that $(G\circ H)-N[D]$ is the disjoint union of $(G-N_G[S])\circ H$ and at most $|S|$ copies of $H-N_H[h]$. Note that 
$$
\begin{array}{rcl}
\omega((G\circ H)-N[D])&=&\max\{\omega((G-N_G[S])\circ H),\omega(H-N_H[h])\}\\
&\le & \max\{\omega(G-N_G[S])\cdot\omega(H),k\}\\
&\le &\max\{(r-1)k,k\}.
\end{array}
$$ 
Since $r=\lceil\frac{n}{k}\rceil$, we have $(r-1)k<n$. Hence $(G\circ H)-N[D]$ contains no complete subgraph of order $n$, and so $D$ is an $\{K_n\}$-isolating set of $G\circ H$. As $|D|=|S|=\iota(G,K_r)$, we get $\iota(G \circ H,K_n)\le \iota(G,K_r)$.

Let $D$ be an $\iota(G \circ H,K_n)$-set and suppose that $|D|<\iota(G,K_r)$. Let $S=p_G(D)$, and note that $(G-N_G[S])\circ H$ is a subgraph of $(G\circ H)-N[D]$. Thus, $$\omega((G\circ H)-N[D])\ge \omega(G-N_G[S])\omega(H).$$
Since $|S|\le |D|<\iota(G,\{K_r\})$, we have $\omega(G-N_G[S])\ge r$. Therefore,  $(G\circ H)-N[D]$ contains a complete subgraph of order $k\cdot r$, which is greater than or equal to $n$, a contradiction. We conclude that $\iota(G \circ H,K_n)\ge\iota(G,K_r)$, and the proof is complete.
\end{proof}

Next, when $\iota(H,\cF)=1$, we obtain the following formula for the $\cF$-isolation number of the lexicographic product $G\circ H$, which holds also if $G$ is disconnected, and holds for an arbitrary family $\cF$.

\begin{proposition} \label{prop:lexico-small}
Let $G$ and $H$ be nontrivial graphs and let $\cF$ be an arbitrary family of graphs.  If $\iota(H,\cF)=1$, then $\iota(G \circ H,\cF)=\gamma(G)$.
\end{proposition}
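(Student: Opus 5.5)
Both inequalities will be proved directly from the structure of $G\circ H$, in which $N_{G\circ H}[(g,h)]=(N_G(g)\times V(H))\cup(\{g\}\times N_H[h])$.

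\emph{Upper bound.} Let $S$ be a $\gamma(G)$-set and let $v\in V(H)$ be a vertex such that $\{v\}$ is an $\cF$-isolating set of $H$; such a $v$ exists because $\iota(H,\cF)=1$. Put $D=S\times\{v\}$. For $g\notin S$, some neighbor $s\in S$ of $g$ exists, so the whole fiber ${}^{g}\!H$ lies in $N[(s,v)]$. For $g\in S$, the fiber ${}^{g}\!H$ meets $N[(g,v)]$ in $\{g\}\times N_H[v]$. Hence
\[
V(G\circ H)-N[D]\subseteq\bigcup_{g\in S}\{g\}\times\bigl(V(H)-N_H[v]\bigr).
\]
Since $(g,v)\in D$ is adjacent to every vertex of ${}^{g'}\!H$ whenever $gg'\in E(G)$, the remaining pieces for distinct $g\in S$ have no edges between them. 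So each component of the remaining graph lies inside a single copy of $H-N_H[v]$. This copy contains no member of $\cF$, but a possibly disconnected member of $\cF$ could use several copies, and this is the delicate point. I would handle it by strengthening the construction: also place $(g,v)$ for a suitable $g$ so that at most one copy of $H-N_H[v]$ survives. If the intended convention (as in~\cite{iso}) treats the families as connected graphs, the argument is immediate. Otherwise I would look for an argument that each $g\in S$ can use any $v$; I expect this step to be the main obstacle.

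\emph{Lower bound.} Let $D$ be an $\cF$-isolating set of $G\circ H$ and suppose $|D|<\gamma(G)$. Then $p_G(D)$ does not dominate $G$, so some $g\in V(G)$ has $N_G[g]\cap p_G(D)=\emptyset$. The fiber ${}^{g}\!H$ is then disjoint from $N[D]$, because a vertex $(g,h)$ is dominated by $(g',h')$ only if $g'\in N_G[g]$. Since ${}^{g}\!H$ induces a copy of $H$ and $\iota(H,\cF)=1>0$, $H$ contains a member of $\cF$, which contradicts $D$ being $\cF$-isolating. Hence $\iota(G\circ H,\cF)\ge\gamma(G)$.

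Combining the two bounds, with the upper bound finalized as indicated, gives $\iota(G\circ H,\cF)=\gamma(G)$. The nontriviality of $H$ is used only through $\iota(H,\cF)=1$.
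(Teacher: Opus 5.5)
Your lower bound is the paper's argument, and your upper-bound construction $S\times\{v\}$ is also the paper's. The paper simply asserts that $S\times\{h\}$ is $\cF$-isolating and never addresses the issue you raise.

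Your worry is justified, and it is more than a gap in the argument: for families with disconnected members the proposition is false. So no repair of the kind you suggest exists. Adding extra vertices $(g,v)$ breaks the count $|D|=\gamma(G)$, and choosing a different $v$ in each fiber does not help.

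Here is a counterexample. Take $\cF=\{2K_2\}$, $H=C_6$ and $G=P_5$, with the vertices of $P_5$ labelled $1,\dots,5$ along the path.
\begin{itemize}
\item $C_6$ contains $2K_2$, while $C_6-N[v]\cong P_3$ does not, so $\iota(C_6,2K_2)=1$.
\item The $\gamma(P_5)$-sets are $\{1,4\}$, $\{2,4\}$ and $\{2,5\}$, and each consists of two nonadjacent vertices.
\item Suppose $D$ were a $2K_2$-isolating set of $P_5\circ C_6$ with $|D|\le 2$. Your own lower-bound argument forces $p_G(D)$ to dominate $P_5$. So $p_G(D)$ is a $\gamma(P_5)$-set and $D=\{(x,a),(y,b)\}$ with $x\ne y$ and $xy\notin E(P_5)$.
\item Then $(y,b)$ has no neighbor in the fiber over $x$. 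Hence $\{x\}\times(V(C_6)-N_{C_6}[a])$ survives and induces a $P_3$, and likewise over $y$.
\item One edge from each surviving piece gives a $2K_2$.
\end{itemize}
Therefore $\iota(P_5\circ C_6,2K_2)\ge 3>2=\gamma(P_5)$.

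The right fix is to assume that every member of $\cF$ is connected, which holds for all families the paper actually uses. Under that hypothesis your argument is complete:
\begin{itemize}
\item As you observed, the only surviving vertices lie in fibers over vertices $g\in S$ with no neighbor in $S$.
\item Each surviving piece is a copy of $H-N_H[v]$, and there are no edges between different pieces.
\item So a copy of a connected $F\in\cF$ would have to lie inside a single piece, which is impossible.
\end{itemize}
With this hypothesis your proof coincides with the paper's, and it is more careful, since it justifies the step the paper only asserts.
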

\begin{proof} 
Let $\{h\}$ be an $\cF$-isolating set of $H$.  If  $D$ is a $\gamma(G)$-set, then $D \times \{h\}$ is an $\cF$-isolating set of $G \circ H$, which implies that $\iota(G \circ H,\cF) \leq \gamma(G)$.  To see the reverse inequality, let $C \subseteq V(G \circ H)$ such that $|C| < \gamma(G)$.  There exists a vertex $z$ in $G -N[p_G(C)]$, and so $^z\!H\cap N[C]=\emptyset$. Since $^z\!H$ contains a copy of at least one $F\in \cF$, we are in a contradiction with $C$ being an $\cF$-isolating set.  Therefore, $\iota(G \circ H,\cF) \geq \gamma(G)$.
\end{proof}

Finally, we consider the case when the $\cF$-isolation number of the second factor is at least $2$. The following result will be used in the proof. 

\begin{theorem}\hskip-0.5pt {\rm \cite{spt}} 
\label{thm:gammalex}
If $G$ is a nontrivial connected graph and $H$ is a connected graph with $\gamma(H)\ge 2$, then $\gamma(G \circ H)=\gamma_t(G)$.
\end{theorem}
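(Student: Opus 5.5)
The plan is to prove the two inequalities $\gamma(G\circ H)\le\gamma_t(G)$ and $\gamma(G\circ H)\ge\gamma_t(G)$ separately. The upper bound comes from lifting a total dominating set of $G$ into a single $G$-fiber. The lower bound comes from projecting a minimum dominating set of $G\circ H$ onto $G$ and repairing the projection into a total dominating set without increasing its size. The hypothesis $\gamma(H)\ge 2$ enters only in the lower bound.

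\emph{Upper bound.} Since $G$ is connected and nontrivial, it has no isolated vertices, so $\gamma_t(G)$ is defined. Let $T$ be a $\gamma_t(G)$-set, fix any $h\in V(H)$, and put $D=T\times\{h\}$. Take an arbitrary vertex $(g,h')$ of $G\circ H$. Because $T$ is totally dominating, $g$ has a neighbor $t\in T$ with $gt\in E(G)$. By the definition of the lexicographic product, $(t,h)$ is adjacent to $(g,h')$. Hence $D$ dominates $G\circ H$, and $\gamma(G\circ H)\le |T|=\gamma_t(G)$.

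\emph{Lower bound.} Let $D$ be a $\gamma(G\circ H)$-set and let $S=p_G(D)$. A vertex $(g,h')$ is dominated either by a vertex of $D$ in a fiber ${}^{g'}\!H$ with $gg'\in E(G)$, or by a vertex of $D\cap {}^{g}\!H$ that is equal or adjacent to it inside the copy of $H$. This yields two facts.
\begin{itemize}
\item[(i)] If $g\notin S$, then no vertex of ${}^{g}\!H$ lies in $D$, so every vertex of ${}^{g}\!H$ is dominated from outside the fiber. Hence $g$ has a neighbor in $S$.
\item[(ii)] If $g\in S$ has no neighbor in $S$, then ${}^{g}\!H$ is dominated entirely by $D\cap{}^{g}\!H$. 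The projection of this set is a dominating set of $H$, so $|D\cap{}^{g}\!H|\ge\gamma(H)\ge 2$.
\end{itemize}
Now form $T$ from $S$ by adding, for each $g\in S$ that has no neighbor in $S$, one arbitrary neighbor of $g$; such a neighbor exists because $G$ is connected and nontrivial. Then $T$ is a total dominating set of $G$:
\begin{itemize}
\item every vertex outside $S$ has a neighbor in $S$ by (i);
\item every vertex of $S$ either already has a neighbor in $S$ or received an added neighbor.
\end{itemize}
For the size of $T$, each vertex $g\in S$ accounts for $|D\cap{}^{g}\!H|$ vertices of $D$. By (ii), every $g\in S$ that triggered an extra vertex accounts for at least two vertices of $D$. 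Summing over the fibers gives $|T|\le |D|$, and therefore $\gamma_t(G)\le\gamma(G\circ H)$.

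The main point needing care is fact (ii). There one must check that, when $g$ has no neighbor in $S$, no vertex of ${}^{g}\!H$ is dominated from another fiber. This holds because any vertex of $D$ outside ${}^{g}\!H$ lies in a fiber ${}^{g'}\!H$ with $g'\in S$, and $gg'\notin E(G)$ by assumption. The rest of the argument is direct counting.
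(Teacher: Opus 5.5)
Your proof is correct. The paper does not prove this theorem itself (it is quoted from \cite{spt}), but your lower-bound argument is the same projection-and-repair idea the paper uses in its proof of Theorem~\ref{thm:lexico}: a fiber ${}^{y}\!H$ over a vertex $y$ with no neighbor in the projection must contain at least two vertices of the set, and the surplus can be traded for a vertex in a neighboring fiber. Your direct charging $|T|\le |S|+|\{g\in S: N_G(g)\cap S=\emptyset\}|\le |D|$ replaces the paper's extremal choice of the set, and it also shows that neither the connectivity of $H$ nor the minimality of $D$ is actually needed.
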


In the next theorem, we restrict $G$ and $H$ to be connected, while for arbitrary $G$ and $H$ (with $G$ having no isolated vertices) the result can be easily extended. 

\begin{theorem} \label{thm:lexico}
If $G$ and $H$ are nontrivial connected graphs and $\cF$ is an arbitrary family of graphs such that $\iota(H,\cF)\geq 2$, then 
\[\iota(G \circ H,\cF)=\gamma(G \circ H)=\gamma_t(G)\,.\]
\end{theorem}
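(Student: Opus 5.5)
The plan is to prove the chain $\gamma_t(G)\ge \gamma(G\circ H)\ge \iota(G\circ H,\cF)\ge \gamma_t(G)$. The two middle comparisons are immediate. A dominating set of $G\circ H$ leaves nothing undominated, so it is trivially $\cF$-isolating, and hence $\iota(G\circ H,\cF)\le\gamma(G\circ H)$. For the equality $\gamma(G\circ H)=\gamma_t(G)$ I would invoke Theorem~\ref{thm:gammalex}. Its hypothesis $\gamma(H)\ge 2$ follows from $\iota(H,\cF)\ge 2$, because any dominating set of $H$ is $\cF$-isolating, so $\gamma(H)\ge\iota(H,\cF)\ge 2$. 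The hypotheses that $G$ is nontrivial and connected and that $H$ is connected are given directly. If one prefers to avoid the citation for the upper bound, it can also be checked by hand: for a $\gamma_t(G)$-set $A$, the set $A\times\{h\}$ dominates $G\circ H$, since every vertex $(g,h')$ has $g$ adjacent to some $a\in A$.

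The substantive part is the lower bound $\iota(G\circ H,\cF)\ge\gamma_t(G)$. Let $D$ be an $\cF$-isolating set of $G\circ H$. I will construct from $D$ a total dominating set $T$ of $G$ with $|T|\le|D|$. For $g\in V(G)$, write $D_g=D\cap {}^g\!H$. The key observation is about which vertices of the fiber ${}^g\!H$ are dominated. If $g$ has a neighbor in $p_G(D)$, then the whole fiber ${}^g\!H$ is dominated. If $g$ has no neighbor in $p_G(D)$, then the only dominated vertices of ${}^g\!H$ are those in $\{g\}\times N_H[p_H(D_g)]$. In that case $|D_g|\le 1$ would be fatal. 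Indeed, since $\iota(H,\cF)\ge 2$, the set $p_H(D_g)$ of size at most $1$ is not $\cF$-isolating in $H$. So $H-N_H[p_H(D_g)]$ contains a member of $\cF$, and this copy lies inside the undominated part of ${}^g\!H$, a contradiction. Consequently, every vertex $g$ with no neighbor in $p_G(D)$ satisfies $|D_g|\ge 2$, and in particular $g\in p_G(D)$.

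To build $T$, start from $T_0=p_G(D)$ and repair it. Let $U$ be the set of vertices $g\in p_G(D)$ having no neighbor in $p_G(D)$. By the previous paragraph, each such $g$ has $|D_g|\ge 2$. Also, every vertex outside $p_G(D)$ already has a neighbor in $p_G(D)$. For each $g\in U$, choose a neighbor $g'$ of $g$, which exists because $G$ is connected and nontrivial. Set $T=p_G(D)\cup\{g' : g\in U\}$. The extra vertex $g'$ is paid for by the second element of $D_g$, so
$$|T|\le|p_G(D)|+|U|\le\sum_g|D_g|=|D|.$$
It remains to check that $T$ is a total dominating set. Vertices outside $p_G(D)$ have a neighbor in $p_G(D)\subseteq T$. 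Vertices of $p_G(D)-U$ have a neighbor in $p_G(D)$ by the definition of $U$. Each $g\in U$ is adjacent to its chosen $g'\in T$. Hence $\gamma_t(G)\le|T|\le|D|$.

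The main obstacle I expect is the fiber analysis in the second paragraph. One must be careful that, when $g$ has no neighbor in $p_G(D)$, vertices of $D$ lying in other fibers contribute nothing to ${}^g\!H$. This holds because in $G\circ H$ a vertex $(g_1,h_1)$ with $g_1\ne g$ is adjacent to vertices of ${}^g\!H$ only when $g_1g\in E(G)$. With this established, the counting of $T$ against $D$ is routine.
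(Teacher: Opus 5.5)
Your proof is correct, but you organize the key lower bound $\iota(G\circ H,\cF)\ge\gamma_t(G)$ differently from the paper.

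The paper fixes an $\iota(G\circ H,\cF)$-set $S$ that, among all such sets, minimizes the number of $H$-fibers containing at least two vertices of $S$. It then shows that $p_G(S)$ is itself a total dominating set. Suppose $y\in p_G(S)$ has no neighbor in $p_G(S)$ and $|S\cap{}^y\!H|=k\ge 2$. The surplus vertices $(y,h_2),\dots,(y,h_k)$ are traded for a single vertex $(z,h_1)$ with $z\in N_G(y)$. This keeps $N[S]$ dominated and lowers either $|S|$ (if $k>2$) or the number of crowded fibers (if $k=2$).

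You make the same trade directly in $G$, for an arbitrary isolating set $D$, charging each added neighbor $g'$ to the second vertex of $D_g$. This removes the need for an extremal choice. Your observation that $|D_g|\le 1$ is fatal when $g$ has no neighbor in $p_G(D)$ also explicitly disposes of the case $k=1$; the paper's write-up rules out $k\ge 2$ and leaves that final contradiction implicit.

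Your lower bound uses only that $G$ has no isolated vertices, and nothing about $H$ beyond $\iota(H,\cF)\ge 2$. Combined with your by-hand check that $A\times\{h\}$ dominates $G\circ H$ for a $\gamma_t(G)$-set $A$, the whole chain closes without Theorem~\ref{thm:gammalex}. This gives the theorem for all $G$ without isolated vertices and all $H$, which is the extension the paper only mentions in passing. Taking $\cF=\{K_1\}$, where $\iota(H,\cF)=\gamma(H)$, it even reproves Theorem~\ref{thm:gammalex} in that generality.
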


\begin{proof}
By definition, $\iota(G \circ H,\cF)\leq \gamma(G \circ H)$ and by Theorem~\ref{thm:gammalex}, $\gamma(G \circ H)=\gamma_t(G)$.  Thus, it remains to show that 
$\gamma_t(G) \leq \iota(G \circ H,\cF)$.

Let $S$ be a $\iota(G\circ H,\cF)$-set, which at the same time minimizes the number of $H$-fibers that contain more than one vertex from an $\iota(G\circ H,\cF)$-set. 
We claim that the projection $p_G(S)$ is a total dominating set of $G$. 
Suppose first that there is a vertex $x\in V(G)-p_G(S)$ that has no neighbor in $p_G(S)$. Then, in $G\circ H$ no vertex of the fiber $^{x}\!H$ is dominated by $S$. Since $\iota(H,\cF)\geq 2$, we derive that there exists a copy of some graph $F\in \cF$ that is a subgraph of $^{x}\!H$, a contradiction with $S$ being an $\cF$-isolating set in $G\circ H$. Thus, $p_G(S)$ is a dominating set of $G$. Suppose next that there exists a vertex $y\in p_G(S)$ such that $N_G(y)\cap p_G(S)=\emptyset$. We claim that $|S\,\cap \,^{y}\!H|=1$.

Let $|S\,\cap \,^{y}\!H|=k$. Let $(y,h_1),\ldots,(y,h_k)$ be the vertices in $S\,\cap \,^{y}\!H$. First, suppose $k>2$. Let $T=(S -  \{(y,h_2),\ldots,(y,h_k)\})\cup \{(z,h_1)\}$, where $z\in N_G(y)$. Note that $N[S]\subseteq N[T]$, which implies that $T$ is an $\cF$-isolating set of $G\circ H$. This is a contradiction, since $|T|<|S|$. Finally, suppose that $k=2$. Let $T=(S - \{(y,h_2)\})\cup \{(z,h_1)\}$, where $z\in N_G(y)$. Note that $N[S]\subseteq N[T]$, which implies that $T$ also is an $\cF$-isolating set of $G\circ H$. In addition $|T|=|S|$, yet $T$ has fewer $H$-fibers containing more than one vertex from $T$ than $S$ does, which contradicts the choice of $S$.  Therefore, $p_G(S)$ is a total dominating set of $G$, which implies that 
$\gamma_t(G) \leq |p_G(S)| \leq |S|=\iota(G \circ H,\cF)$.
\end{proof}

Note that Theorem~\ref{thm:lexico} is a direct generalization of~\cite[Theorem 4.3]{iso}. 

\section{Cartesian product}
\label{sec:car}

We start by extending an upper bound on $\iota(G\Box H)$ from~\cite{iso} to an upper bound on $\iota(G\cp H,\cF)$ where $\cF$ is an arbitrary family of graphs.

\begin{theorem}\label{thm:uperBoundProduct}
If $G$ and $H$ are graphs and $\cF$ is an arbitrary family of graphs, then
$$\iota(G\,\Box\, H,\cF)\le \min\{\alpha_k(G)\iota(H,\cF)+(n(G)-\alpha_k(G))\gamma(H),\alpha_\ell(H)\iota(G,\cF)+(n(H)-\alpha_\ell(H))\gamma(G)\},$$
where $k=\omega(I_{\cF}(H))$ and $\ell=\omega(I_{\cF}(G))$. 
\end{theorem}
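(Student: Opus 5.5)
By symmetry it suffices to prove
$$\iota(G\cp H,\cF)\le \alpha_k(G)\iota(H,\cF)+(n(G)-\alpha_k(G))\gamma(H),$$
where $k=\omega(I_{\cF}(H))$. The plan is to build an explicit $\cF$-isolating set fiber by fiber, guided by an induced $k$-colorable subgraph of $G$ and a clique of the $\cF$-isolation graph of $H$.

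\textbf{Setup.} Let $X\subseteq V(G)$ induce a $k$-colorable subgraph with $|X|=\alpha_k(G)$, and fix a proper coloring $X=X_1\cup\cdots\cup X_k$ into independent sets (some possibly empty). Let $A_1,\ldots,A_k$ be a clique in $I_{\cF}(H)$. Each $A_i$ is an $\iota(H,\cF)$-set, and the leftover sets $L_{A_i}=V(H)-N_H[A_i]$ are pairwise disjoint. Fix also a $\gamma(H)$-set $D$.

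\textbf{Construction.} Define
$$S=\bigcup_{i=1}^{k}\bigl(X_i\times A_i\bigr)\;\cup\;\bigl((V(G)-X)\times D\bigr).$$
Then $|S|=\alpha_k(G)\iota(H,\cF)+(n(G)-\alpha_k(G))\gamma(H)$.

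\textbf{Verification.}

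1. For $g\notin X$, the fiber $^{g}\!H$ is fully dominated, since $\{g\}\times D$ dominates it within the fiber.

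2. For $g\in X_i$, the undominated part of $^{g}\!H$ is contained in $\{g\}\times L_{A_i}$, because domination inside the fiber already covers $\{g\}\times N_H[A_i]$.

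3. Hence $V(G\cp H)-N[S]\subseteq\bigcup_i X_i\times L_{A_i}$.

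4. The subgraph $W$ induced by the undominated vertices has no edges between different fibers. Any edge of $G\cp H$ between fibers joins $(g,h)$ and $(g',h)$ with $gg'\in E(G)$. If $g,g'$ lie in the same $X_i$, they are nonadjacent, since $X_i$ is independent. If they lie in different classes $X_i,X_j$, then $h\in L_{A_i}\cap L_{A_j}=\emptyset$, which is impossible.

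5. Therefore $W$ is a disjoint union of graphs, each isomorphic to an induced subgraph of $H[L_{A_i}]$ minus further vertices. These contain no member of $\cF$ because $A_i$ is $\cF$-isolating in $H$.

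\textbf{Main obstacle.} The delicate point is the last step: a graph in $\cF$ might be disconnected and could spread over several components of $W$. I would handle this by reading the definition with members of $\cF$ taken as connected, which is implicit in the paper's usage. Alternatively, one can note that the undominated part of each fiber sits inside a copy of $H-N_H[A_i]$, and argue as in~\cite{iso}.

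The degenerate case $I_{\cF}(H)$ with $k$ exceeding the number of available $\iota$-sets does not arise, since $k=\omega(I_{\cF}(H))$ exactly. The case $\iota(H,\cF)=0$ is also harmless: then the empty set is the unique $\iota$-set, $L=V(H)$, $k=1$, and the bound still holds by the same argument.
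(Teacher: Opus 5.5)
Your construction is the one the paper defers to \cite[Theorem 3.2]{iso}, and Steps 1--4 are correct: you pair the color classes $X_1,\ldots,X_k$ of a maximum induced $k$-colorable subgraph of $G$ with a clique $A_1,\ldots,A_k$ of $I_{\cF}(H)$, and put a $\gamma(H)$-set on every other fiber. When every member of $\cF$ is connected, Step 5 finishes the proof. A connected copy of some $F\in\cF$ in the undominated subgraph must lie in a single $H$-fiber over some $g\in X_i$, hence in a copy of a subgraph of $H-N_H[A_i]$.

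The fallback you offer for disconnected members (``argue as in \cite{iso}'') cannot work. The inequality is false for such families, so connectivity of the members of $\cF$ must be an explicit hypothesis; the paper's ``arbitrary family'' tacitly needs it too.

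For a counterexample, take $\cF=\{2K_2\}$, $G=P_3$ with vertices $a,b,c$ in path order, and $H=K_3$ on $\{x,y,z\}$.
\begin{itemize}
\item Since $K_3$ has no two disjoint edges, $\iota(K_3,\cF)=0$. Hence $k=1$, and the first term of the minimum is $\alpha_1(P_3)\cdot 0+(3-2)\gamma(K_3)=1$.
\item Yet no single vertex is $\cF$-isolating in $P_3\cp K_3$. Deleting $N[(b,x)]$ leaves the disjoint edges $(a,y)(a,z)$ and $(c,y)(c,z)$. Deleting $N[(a,x)]$ leaves the disjoint edges $(b,y)(b,z)$ and $(c,x)(c,y)$.
\item By the symmetries of $P_3$ and $K_3$, these two cases cover all vertices, so $\iota(P_3\cp K_3,\cF)\ge 2$.
\end{itemize}
This is exactly your Step 5 failing: $S=\{(b,x)\}$ leaves one edge in each of the fibers over $a$ and $c$, and together they form $2K_2$. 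With a disconnected factor it is even starker: $G=\overline{K_2}$ and $H=K_2$ give a first term of $0$, while $G\cp H\cong 2K_2$ forces $\iota(G\cp H,\cF)=1$.

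State the connectivity assumption in the theorem rather than treating it as a reading convention. With it, your proof is complete.
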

The proof of the above theorem follows similar lines as the proof of~\cite[Theorem 3.2]{iso}, and we omit it. Next, we prove that the bound in Theorem~\ref{thm:uperBoundProduct} is widely sharp by using the following formula. 

\begin{proposition}
\label{prp:Cart1}
Let $\cF$ be any finite family of graphs and let $r$ be the largest order of the graphs in $\cF$. If $m$ and $n$ are positive integers such that $m\ge 3$ and $n\ge r+3$, then $$\iota(C_m\cp K_n,\cF)=m.$$
\end{proposition}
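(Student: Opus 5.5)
The plan is to prove the two inequalities $\iota(C_m\cp K_n,\cF)\le m$ and $\iota(C_m\cp K_n,\cF)\ge m$ separately. Throughout we assume that every member of $\cF$ has order at most $r$ and that $\cF\neq\emptyset$. Then every $F\in\cF$ is a subgraph of $K_r$, so any set of at least $r$ vertices of a single $K_n$-fiber that avoids $N[S]$ yields a forbidden subgraph.

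\emph{Upper bound.} I would take any set $S=\{(i,h_i): i\in V(C_m)\}$ with exactly one vertex in each $K_n$-fiber. Since each $K_n$-fiber induces a complete graph, $(i,h_i)$ dominates the whole fiber $^{i}\!K_n$. Hence $S$ is a dominating set of $C_m\cp K_n$, and in particular an $\cF$-isolating set. This gives $\iota(C_m\cp K_n,\cF)\le \gamma(C_m\cp K_n)\le m$.

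\emph{Lower bound.} Let $S$ be an $\cF$-isolating set and write $s_i=|S\cap\, ^{i}\!K_n|$, with indices taken modulo $m$. Suppose fiber $i$ contains no vertex of $S$. Each vertex $(i\pm1,h)\in S$ dominates exactly one vertex of $^{i}\!K_n$, namely $(i,h)$. So at least $n-s_{i-1}-s_{i+1}$ vertices of $^{i}\!K_n$ lie outside $N[S]$, and these induce a complete graph. Because $n\ge r+3$, if $s_{i-1}+s_{i+1}\le 3$ we get at least $r$ undominated vertices forming a clique. This contains every $F\in\cF$, which is a contradiction. Consequently, every index $i$ in the set $E=\{i: s_i=0\}$ satisfies $s_{i-1}+s_{i+1}\ge 4$. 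Here $m\ge3$ guarantees that $i-1$ and $i+1$ are distinct fibers, which the counting needs.

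\emph{Counting step (the main obstacle).} It remains to show that these local conditions force $\sum_i s_i\ge m$. Let $N=\{j: s_j\ge1\}$. I claim that each $i\in E$ satisfies $\sum_{j\in\{i-1,i+1\}\cap N}(s_j-1)\ge 2$.
\begin{itemize}
\item If both neighbours of $i$ lie in $N$, the sum equals $s_{i-1}+s_{i+1}-2\ge 2$.
\item If one neighbour is empty, the other has $s_j\ge4$, so $s_j-1\ge3$.
\end{itemize}
Summing this claim over all $i\in E$, each $j\in N$ is counted at most twice, once for each of its two neighbours. This gives $2|E|\le 2\sum_{j\in N}(s_j-1)$, that is, $\sum_{j\in N}s_j\ge |N|+|E|=m$. (If $E=\emptyset$, then $\sum s_j\ge m$ holds trivially.) Thus $|S|\ge m$, which gives the lower bound and completes the plan.
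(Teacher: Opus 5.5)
Your proof is correct, but the route differs from the paper's in two places. The local step is the same in both: an empty fiber $^{i}\!K_n$ forces $s_{i-1}+s_{i+1}\ge n-r+1\ge 4$.

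\emph{Upper bound.} The paper gets it by invoking Theorem~\ref{thm:uperBoundProduct} with $I_{\cF}(K_n)=K_n$ and $\alpha_n(C_m)=m$, because the proposition is meant to exhibit sharpness of that bound. You instead note directly that one vertex per $K_n$-fiber is a dominating set. This is more elementary and does not rely on a theorem whose proof the paper omits.

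\emph{Counting step.} The paper only asserts that $\sum_i f_i\ge m$ ``can be proved by induction'' for any cyclic sequence satisfying the local condition. Your discharging argument supplies this step explicitly: each empty fiber draws an excess of at least $2$ from its nonempty neighbours, and each unit of excess $s_j-1$ is claimed by at most two empty fibers. It also shows exactly where $m\ge 3$ is used (distinct neighbouring fibers). It shows too why the threshold $4$, and hence $n\ge r+3$, matters: with threshold $3$ an empty fiber with two nonempty neighbours is guaranteed only excess $1$, and the argument breaks down.
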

\begin{proof}
 Since $n$ is greater than the order of any graph in $\cF$, we note that $I_{\cF}(K_n)=K_n$, and so $\omega(I_{\cF}(K_n))=n$. Clearly, $\alpha_n(C_m)=m$, because $C_m$ is $3$-colorable. Thus, by Theorem~\ref{thm:uperBoundProduct}, we get $$\iota(C_m\cp K_n,\cF)\le \alpha_n(C_m)\iota(K_n,\cF)+(n(C_m)-\alpha_n(C_m))\gamma(K_n)=m,$$
and so it remains to prove the reverse inequality. 

 Let $V(C_m)=[m]$ and let $S$ be an $\iota(C_m\cp K_n,\cF)$-set. If there exists a $K_n$-fiber such that $^{i}\!K_n\cap S=\emptyset$, then (taking $i$ modulo $m$) $$|^{i-1}\!K_n\cap S|+|^{i+1}\!K_n\cap S|\ge n-r+1\ge 4\,,$$
for otherwise the subgraph of $^{i}\!K_n$ consisting of the vertices that are not dominated by $S$ contains $K_r$ and thus contains a copy of a graph $F\in\cF$ of order $r$. Consider the cyclic order of the integers $f_i=|^{i}\!K_n\cap S|$ for $i\in [m]$, and note that $\sum_{i=1}^m{f_i}=|S|$. By using the condition $$f_{i-1}+f_{i+1}\ge 4$$
whenever $f_i=0$, one can prove by induction that $\sum_{i=1}^m{f_i}\ge m$ for any cyclic order admitting that condition. Hence, $\iota(C_m\cp K_n,\cF)=|S|\ge m$.
\end{proof}

By Theorem~\ref{thm:uperBoundProduct} and Proposition~\ref{prp:Cart1}, we get $$m=\iota(C_m\cp K_n,\cF)\le \alpha_n(C_m)\iota(K_n,\cF)+(n(C_m)-\alpha_n(C_m))\gamma(K_n)=m,$$
implying that the bound in Theorem~\ref{thm:uperBoundProduct} is sharp.

Next we present an alternative upper bound on $\iota(G\circ H,\cF)$, which  generalizes~\cite[Theorem 3.5]{iso}. 
We  need to recall the following definition from~\cite{LB}.
A graph $X$ is \emph{S-prime (with respect to the Cartesian product)} if for all graphs $G$ and $H$, $X$ is a subgraph of $G\cp H$ only if $X$ is a subgraph of $G$ or of $H$. Equivalently, $X$ cannot be embedded as a subgraph of a nontrivial Cartesian product without already being contained in one of the factors. The family of S-prime graphs is quite rich; see~\cite{LB}. In particular, all complete multipartite graphs with the exception of stars and $K_{2,2}$ are S-prime; see~\cite{HW}.

\begin{theorem}
If $G$ and $H$ are graphs and $\cF$ is a family of S-prime graphs, then \[\iota(G\cp H,\cF)\le \beta_\cF^{\rm dom}(G)\beta_\cF^{\rm dom}(H)\,.\]
\end{theorem}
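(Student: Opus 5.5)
Let $A$ be a $\beta_\cF^{\rm dom}(G)$-set and $B$ a $\beta_\cF^{\rm dom}(H)$-set. Then $A$ is both a dominating set and an $\cF$-transversal of $G$, and likewise $B$ in $H$. The plan is to show that $S=A\times B$ is an $\cF$-isolating set of $G\cp H$. This gives $\iota(G\cp H,\cF)\le |A||B|=\beta_\cF^{\rm dom}(G)\beta_\cF^{\rm dom}(H)$.

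\emph{Step 1: the fibers meeting $A$ or $B$ are absorbed.} Every vertex $(a,h)$ with $a\in A$ is dominated by $S$. Indeed, $B$ dominates $H$, so $h\in N_H[b]$ for some $b\in B$, and then $(a,h)\in N[(a,b)]$ inside the $H$-fiber ${}^{a}\!H$. Symmetrically, every $(g,b)$ with $b\in B$ is dominated by $S$, using the $G$-fiber $G^b$ and the fact that $A$ dominates $G$. Hence
$$V(G\cp H)-N[S]\subseteq (V(G)-A)\times (V(H)-B).$$

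\emph{Step 2: S-primeness kills any copy of $F\in\cF$ in the remainder.} Suppose the remainder contains a subgraph $X\cong F$ for some $F\in\cF$. Then $X$ is a subgraph of $(G-A)\cp(H-B)$. This uses that $(G-A)\cp(H-B)$ is exactly the subgraph of $G\cp H$ induced by $(V(G)-A)\times(V(H)-B)$, because induced subgraphs of the factors give induced subgraphs of the Cartesian product. Since $F$ is S-prime, $F$ is isomorphic to a subgraph of $G-A$ or of $H-B$. But $A$ is an $\cF$-transversal of $G$, so $G-A$ contains no member of $\cF$; likewise $H-B$ contains none. This is a contradiction, so $S$ is $\cF$-isolating.

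The only delicate point is applying the S-prime definition to the graphs $G-A$ and $H-B$ rather than to $G$ and $H$. This is fine because the definition quantifies over all graphs. Degenerate cases where $G-A$ or $H-B$ is empty are trivial, since then the remainder is empty; single-vertex factors also cause no issue.
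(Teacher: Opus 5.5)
Your proof is correct and follows the paper's argument. Both take $A\times B$ for minimum dominating $\cF$-transversals $A$ and $B$, observe that the undominated vertices lie in $(V(G)-A)\times(V(H)-B)$ (the paper states this as an equality, which also holds because $A$ and $B$ are dominating), and invoke S-primeness to push any copy of a member of $\cF$ into $G-A$ or $H-B$, where none exists.
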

\begin{proof}
Let $A$ and $B$ be dominating $\cF$-transversals of  $G$ and $H$, respectively, with $|A|=\beta_\cF^{\rm dom}(G)$ and $|B|=\beta_\cF^{\rm dom}(H)$. Set $D=A\times B$, and note that $(G\cp H)-N[D]$ is isomorphic to $(G-A)\cp (H-B)$. Since neither of the factors of this product contains a graph in $\cF$, also $(G-A)\cp (H-B)$ contains no graph in $\cF$, since all graphs in $\cF$ are S-prime.  Therefore, $D$ is an $\cF$-isolating set in $G \cp H$.
\end{proof}

It is generally harder to obtain some good lower bounds on the isolation number of a graph. The following result generalizes a lower bound on $\iota(G\cp H)$ from~\cite{iso} to a lower bound on $\iota(G\cp H,\cF)$ for an arbitrary family $\cF$. Since the proof is similar to the one of~\cite[Theorem 3.6]{iso} as well as to the proof of Theorem~\ref{thm:strongupper}, we omit it. 

\begin{proposition}
If $G$ and $H$ are graphs and $\cF$ a family of graphs, then
$$\iota(G\,\Box\, H,\cF)\ge \max\{\rho_2(G)\iota(H,\cF),\rho_2(H)\iota(G,\cF)\}.$$
\end{proposition}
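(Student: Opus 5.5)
By symmetry it suffices to prove $\iota(G\cp H,\cF)\ge \rho_2(G)\,\iota(H,\cF)$; the other inequality follows by exchanging the roles of $G$ and $H$. I will mirror the argument used for Theorem~\ref{thm:strongupper}. Let $D$ be an $\iota(G\cp H,\cF)$-set and let $A$ be a $\rho_2(G)$-set. Because $A$ is a $2$-packing, the closed neighborhoods $N_G[x]$, $x\in A$, are pairwise disjoint. Hence the sets $(N_G[x]\times V(H))\cap D$, $x\in A$, are pairwise disjoint subsets of $D$. If $|D|<\rho_2(G)\iota(H,\cF)$, pigeonhole gives some $x\in A$ with $|(N_G[x]\times V(H))\cap D|<\iota(H,\cF)$.

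Fix such an $x$. Set $D'=p_H\big((N_G[x]\times V(H))\cap D\big)$, so that $|D'|<\iota(H,\cF)$. Since $D'$ is not $\cF$-isolating in $H$, there is a set $L\subseteq V(H)-N_H[D']$ such that $H[L]$ contains a subgraph isomorphic to some $F\in\cF$. The fiber $^{x}\!H$ induces a copy of $H$ in $G\cp H$, so $\{x\}\times L$ also contains a copy of $F$.

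The key step is to show that no vertex of $\{x\}\times L$ lies in $N_{G\cp H}[D]$. Take $(x,h)$ with $h\in L$ and suppose $(g',h')\in D$ dominates it. Then $g'\in N_G[x]$, so $(g',h')$ lies in $N_G[x]\times V(H)$ and $h'\in D'$. There are two cases.
\begin{itemize}
\item If $g'=x$, then $h'\in N_H[h]$, so $h\in N_H[D']$.
\item If $g'\ne x$, the Cartesian adjacency forces $h'=h$, so $h\in D'$.
\end{itemize}
Either case contradicts $h\notin N_H[D']$. Thus $\{x\}\times L$ survives in $(G\cp H)-N[D]$, contradicting that $D$ is $\cF$-isolating.

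This key step is the only place where the Cartesian product differs from the strong product, and it is actually easier, because $N_{G\cp H}[(g',h')]\subseteq N_{G\boxtimes H}[(g',h')]$. In fact one could simply note that every $\cF$-isolating set of $G\cp H$ is also $\cF$-isolating in $G\boxtimes H$. The reason is that $G\cp H$ is a spanning subgraph of $G\boxtimes H$: a copy of $F$ surviving in $(G\boxtimes H)-N_{G\boxtimes H}[D]$ also survives in $(G\cp H)-N_{G\cp H}[D]$, since the remaining vertex set is larger and the Cartesian edges between them are a subset only if $F$ embeds using Cartesian edges. That caveat makes the shortcut invalid in general. Accordingly, I will present the direct fiber argument above rather than a comparison, because the copy of $F$ lies inside a single $H$-fiber, where the two products coincide.
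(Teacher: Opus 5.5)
Your proposal is correct and follows essentially the paper's route. The paper omits this proof and points to the strong-product argument of Theorem~\ref{thm:strongupper}: a $2$-packing, pigeonhole to find a vertex $x$, projection onto $H$, and a surviving copy of $F$ inside ${}^{x}\!H$. Your case analysis ($g'=x$ versus $g'\neq x$) is exactly the Cartesian-product check that this argument needs.

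One small cleanup: drop the closing digression, which first asserts and then retracts that $\cF$-isolating sets of $G\cp H$ are $\cF$-isolating in $G\boxtimes H$. The assertion is false; for example, with $\cF=\{K_3\}$ the empty set isolates $K_2\cp K_2\cong C_4$ but not $K_2\boxtimes K_2\cong K_4$.
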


\section{Concluding remarks}

A recurring theme of this work is the connection between $\mathcal F$-isolation numbers and structural questions concerning the occurrence of prescribed subgraphs in graph products. Indeed, several of our results rely on understanding when a graph from a given family can appear as a subgraph of a product and, conversely, when such an occurrence is impossible. From this perspective, structural characterizations of subgraph containment and the notion of $S$-primeness are important tools in the study of $\mathcal F$-isolation numbers in graph products. 

In Section~\ref{sec:dir}, we proved an upper bound on the $K_{n_1,\ldots,n_d}$-isolation number of direct products by using the recent characterization of complete multipartite subgraphs in direct products of graphs due to Hickingbotham and Wood. Similarly, in Section~\ref{sec:lex}, Proposition~\ref{prp:lex} relies on a simple structural property describing the existence of complete subgraphs in lexicographic products. During the course of our investigations, we also obtained a characterization of complete multipartite subgraphs in lexicographic products, leading to corresponding bounds on $\iota(G\circ H,K_{n_1,\ldots,n_d})$. Since both the characterization and the resulting bounds are rather technical, we have chosen not to include them here.

Concerning the isolation number in strong products of graphs (Section~\ref{sec:str}), we found an upper and a lower bound, both of which were proved to be sharp by using corona graphs as examples of sharpness. However, these graphs are specific in the sense that their corresponding isolation numbers coincide with their domination numbers as well as $2$-packing numbers. Therefore, we pose the following problem. 
\begin{problem}
Are there graphs $G$ and $H$ and a family $\cF$ such that the inequality in Theorem~\ref{thm:strongupper}, respectively, Proposition~\ref{prp:stronglower}, is attained, yet $\iota(G,\cF)<\gamma(G)$?
\end{problem}

In Section~\ref{sec:car}, we exploit another structural aspect of graph products, namely $S$-primeness, to obtain a general bound on $\mathcal F$-isolation numbers of Cartesian products.  In strong products of graphs, S-prime graphs are well understood, since any connected graph on $n$ vertices can be isometrically embedded into a strong product of $n$ paths~\cite[Theorem 15.1]{HIK}, and thus is a subgraph of a strong product of paths. It would also be interesting to investigate S-prime graphs in other graph products, especially in the direct product whose S-prime graphs have not yet been explored.

Determining the domination number in hypercubes $Q_n$ (Cartesian powers of $K_2$) is one of the most challenging problems in domination theory; see~\cite{bkr-2024}. The same holds for the isolation number, since it was proved in~\cite{iso} that $\iota(Q_{n+1})=\gamma(Q_n)$ holds for any positive integer $n$. We can partially extend this result as follows. If $k$ and $n$ are positive integers, where $k<n$, then
\begin{equation}
\label{eq:cubes}
    \iota(Q_n,Q_k)\ge \gamma(Q_{n-k}).
\end{equation}
Indeed, suppose that  $\iota(Q_n,Q_k)<\gamma(Q_{n-k})$, and consider $Q_n$ represented as $Q_{n-k}\cp Q_k$. Letting $D$ be an $\iota(Q_{n},Q_k)$-set, then $p_{Q_{n-k}}(D)$ is not a dominating set of $Q_{n-k}$. Hence, there exists a vertex $x\in V(Q_{n-k})$ such that $^{x}\!Q_k\subset V(G\cp H)-N[D]$, which yields that there is a subgraph in $Q_n$ isomorphic to $Q_k$, which is not dominated by $N[D]$, a contradiction. Thus~\eqref{eq:cubes} holds. We have not managed to prove the reversed inequality, and leave it as an open problem.
\begin{problem}
Is it true that for any positive integers $n$ and $k$, where $k<n$, we have $$\iota(Q_n,Q_k)= \gamma(Q_{n-k})?$$ 
\end{problem}
More generally, it would be interesting to investigate $\iota(Q_n,\cF)$, where $\cF$ is a family of graphs that appear as a subgraph of a hypercube.

\section*{Acknowledgements}
During final stages of preparation of this manuscript, MS Copilot Premium was used for editing some of its parts, which were then verified by the authors who take full responsibility for the final version. 

This research was supported by an AMS-Simons Research Enhancement Grant for Primarily Undergraduate Institution Faculty.
B.B. also acknowledges the financial support of the Slovenian Research and Innovation Agency (research core funding No.\ P1-0297, projects N1-0285, N1-0431, J1-70045).


\begin{thebibliography}{4}


\bibitem{BBS} K.~Bartolo, P.~Borg and D.~Scicluna, Solution to a problem on isolation of 3-vertex paths, Discrete Math.\ 349 (2026) Paper No. 115312, 8 pp.

\bibitem{borg} P.~Borg, Isolation of regular graphs, stars and $k$-chromatic graphs, Discrete Math.\ (2026) Paper No. 114706, 11 pp.

\bibitem{BLMS} P.~Borg, M.~Lema\'{n}ska, M.~Mora, M.J.~Souto-Salorio,
Upper bounds on the $k$-isolation number, Discrete Math.\ 349 (2026), no.~10, Paper No. 115217, 17 pp.


\bibitem{bg-2024} G.~Boyer, W.~Goddard, 
Disjoint isolating sets and graphs with maximum isolation number, Discrete Appl.\ Math.\ 356 (2024) 110--116.




\bibitem{BDG-12} B.~Bre\v{s}ar, P.~Dorbec, W.~Goddard, B.L.~Hartnell, M.A.~Henning, S.~Klav\v{z}ar, D.F.~Rall, Vizing's conjecture: a survey and recent results, J. Graph Theory 69 (2012) 46--76.


\bibitem{iso} B.~Bre\v{s}ar, T.~Dravec, D.~Johnston, K.~Kuenzel, D.F.~Rall, A.~Tepeh, Isolation number: Cartesian and lexicographic
products and generalized Sierpi\'{n}ski graphs, arXiv:2508.16338; Aug 2025.

\bibitem{bkr-2024} B.~Bre\v{s}ar, S.~Klav\v{z}ar, D.F.~Rall, 
Packings in bipartite prisms and hypercubes, 
Discrete Math.\ 347 (2024) Paper No.\ 113875, 6 pp.


\bibitem{ch-2017} Y.~Caro, A.~Hansberg, Partial domination---the isolation number of a graph,   
Filomat 31 (2017) 3925--3944.

\bibitem{CC} S.~Chen and Q.~Cui, Proof of a conjecture on isolation of cycles in graphs, Discrete Math.\ 349 (2026)  Paper No. 115076.


\bibitem{HIK} R.~Hammack, W.~Imrich, S.~Klav\v{z}ar, Handbook of product graphs, Second Edition, CRC Press, Boca Raton, FL, 2011.

\bibitem{HHH3}
T.W.~Haynes, S.T.~Hedetniemi, M.A.~Henning, Domination in Graphs: Core
Concepts. Series: Springer Monographs in Mathematics, Springer, Cham, 2022.

\bibitem{hel-2013}
M.~Hellmuth, On the complexity of recognizing S-composite and S-prime graphs, Discrete Appl.\ Math.\ 161 (2013) 1006--1013.

\bibitem{HOS} M.~Hellmuth, L.~Ostermeier and P.F.~Stadler, Diagonalized Cartesian products of $S$-prime graphs are $S$-prime, Discrete Math.\  312 (2012) 74--80.

\bibitem{HW}
R.~Hickingbotham, D.R.~Wood, Structural properties of graph products, J. Graph Theory 109 (2025) 107--136.


\bibitem{spt} T.~Kraner  \v{S}umenjak, P.~Pavli\v{c},  A.~Tepeh, 
On the Roman domination in the lexicographic product of graphs.
 Discrete Appl. Math.  160  (2012) 2030--2036.

\bibitem{LB} R.H.~Lamprey, B.H.~Barnes, A new concept of primeness in graphs, Networks 11 (1981) 279--284.
 
\bibitem{lms-2024} M.~Lema\'{n}ska, M.~Mora, M.J.~Souto-Salorio, 
Graphs with isolation number equal to one third of the order,
Discrete Math.\ 347 (2024) Paper No.\ 113903, 10 pp.


\bibitem{tjk-2019} S.~Tokunaga, T.~Jiarasuksakun, P.~Kaemawichanurat, 
Isolation number of maximal outerplanar graphs,
Discrete Appl.\ Math.\ 267 (2019) 215--218.

 
\end{thebibliography}
\end{document}